\documentclass{article}
\usepackage[top=1.0in, left=1.2in, right=1.2in, bottom=1.0in]{geometry}
\usepackage[numbers,sort&compress]{natbib}
\usepackage{graphicx}
\usepackage{amsmath}
\usepackage{dsfont}
\usepackage{amssymb}
\usepackage{setspace}
\usepackage{textgreek}
\usepackage{titlesec}
\usepackage{caption}
\usepackage{xcolor}
\usepackage[most]{tcolorbox}
\usepackage[colorlinks=true,
            linkcolor=blue,
            citecolor=blue,
            urlcolor=blue,
            pdfborder={0 0 0}]{hyperref}
\usepackage[version=4,arrows=pgf-filled,textfontname=sffamily,mathfontname=mathsf]{mhchem}

\usepackage{setspace}
\usepackage{amsthm}
\theoremstyle{plain}

\newtheorem{proposition}{Proposition}[section]

\newtheorem{lemma}{Lemma}[section]
\theoremstyle{definition}
\newtheorem{remark}{Remark}[section]

\newtheorem{definition}{Definition}[section]
\usepackage{authblk}
\renewcommand{\vec}[1]{\boldsymbol{#1}}

\title{Structural functional identifiability and model discovery in \\ differential equation models}
\author[1]{Torkel E Loman}
\author[2]{Alexander P Browning}
\author[1]{Ruth E Baker}
\date{}

\affil[1]{Mathematical Institute, University of Oxford, Oxford, United Kingdom}
\affil[2]{School of Mathematics and Statistics, University of Melbourne, Australia}

\begin{document}

\maketitle


\begin{abstract}

Differential equation models are widely used to describe, interpret, and predict dynamical phenomena across science and engineering. In practice, however, the governing dynamics are rarely fully known and must be inferred from observational data. Traditionally, inverse problems in differential equation modelling have focused on estimating unknown parameter values. In this setting, structural identifiability determines whether parameter values can, in principle, be uniquely recovered from ideal observations and is, therefore, a prerequisite for meaningful inference. More recently, the integration of machine learning with mechanistic modelling has enabled the discovery of unknown equations, functions, and constitutive relationships, substantially expanding the space of admissible models. This raises a fundamental question: under what conditions can unknown functional components be uniquely recovered from data? In this paper, we generalise the classical notion of structural parameter identifiability to functional identifiability. We first identify broad classes of models for which unique functional recovery is impossible. We then show how functional identifiability can be assessed for differential equation models using differential algebra-based techniques which are well-established as a means of assessing structural identifiability for ordinary differential equation-based models. Our framework reveals new phenomena that arise in the transition from parametric to functional inference and have no analogue in the classical setting. Finally, we characterise functional identifiability in several common model classes. Taken together, our results demonstrate that functional identifiability provides a theoretical foundation for modern inverse problems in differential equation modelling, particularly those that use machine learning representations of unknown system components.

\end{abstract}


\section{Introduction}

Mathematical models are widely used to describe, explain, and predict the behaviour of dynamical systems. Traditionally, such models are mechanistic: their governing equations are derived from physical, biological, or other domain-specific principles. In practice, however, these principles rarely determine a model completely. Mechanistic models, therefore, typically contain unknown quantities that must be inferred from observational data before the models can be used for prediction or decision-making. Historically, these unknowns have been finite-dimensional parameters, leading to a rich literature on parameter estimation and identifiability~\cite{bellman_structural_1970,raue_structural_2009,Chis_struct_ident_2011,villaverde_protocol_2022,simpson_parameter_2026}. Recent advances in machine learning have substantially expanded the range of unknown model components that can be inferred from data. Rather than estimating only scalar parameters, it is now increasingly common to learn unknown functional relationships directly from observations. This has led to the emergence of hybrid modelling approaches that combine mechanistic structures with learning of unknown functions and have found applications across the physical, biological, and engineering sciences~\cite{alber_integrating_2019,noordijk_rise_2024,willard_integrating_2022}.

In this work, we focus on \emph{function inference}, where the structure of a differential equation model is assumed known up to one or more unknown functions that must be learnt from data (Figure~\ref{fig:ude_workflow}). These functions may represent unknown rate laws, interaction terms, transfer processes, feedback relationships, potential fields, or other system components with forms that are not known \textit{a priori}. Unlike scalar parameters, functions are infinite-dimensional objects, substantially expanding the space of admissible models. In practice, such functions are often represented using flexible approximators such as neural networks or Gaussian processes~\cite{rackauckas_universal_2021,raissi_physics-informed_2019,karniadakis_physics-informed_2021,dandekar_bayesian_2022}. This shift from parameter inference to function inference raises a natural question: under what conditions can an unknown function be uniquely recovered from data?

\begin{figure}
    \centering
    \includegraphics[width=0.95\linewidth]{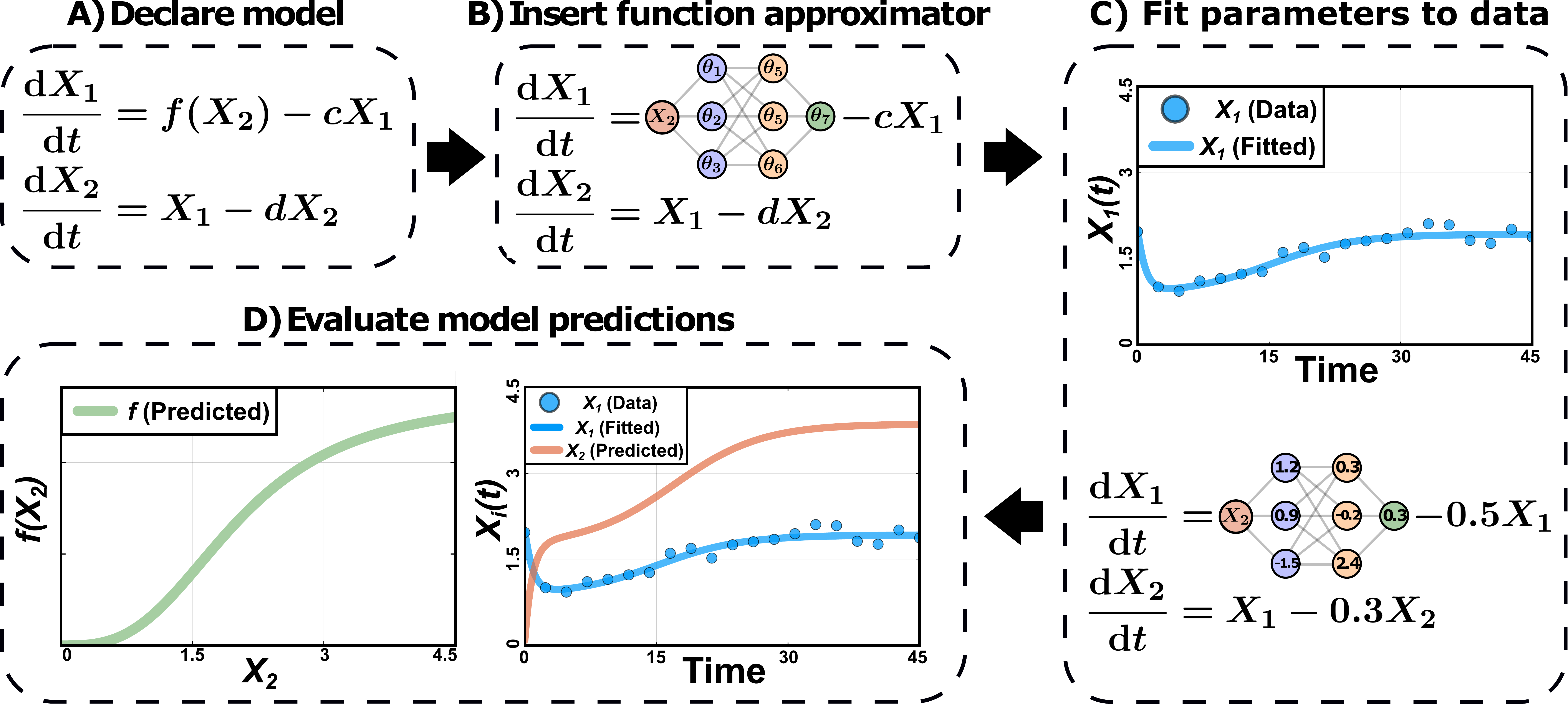}
    \caption{\textbf{Typical function learned workflow.} Example workflow where a functional form is learnt using a universal differential equation combined with data. (A) A differential equation model is formulated, including unknown parameters (here $c$ and $d$) and functions (here $f$). (B) The unknown function is replaced with a universal function approximator (here, a neural network) which represents an arbitrary function through a finite set of parameters. (C) The model's scalar parameters (both mechanistic parameters and those representing the unknown function) are fitted to data using a standard parameter fitting workflow. (D) The fitted model can be evaluated to, for example, predict the time evolution of unobserved variables (here, $X_2(t)$) or the fitted function (by evaluating the fitted function approximator on some domain).}
    \label{fig:ude_workflow}
\end{figure}

In the parameter estimation literature, \emph{identifiability} concerns whether model parameters can be uniquely determined from observational data~\cite{bellman_structural_1970,cobelli_parameter_1980,hines_determination_2014,villaverde_protocol_2022}. A distinction is commonly drawn between \emph{structural} and \emph{practical} identifiability. Methods for structural identifiability analysis ask whether distinct parameter values can produce identical observable dynamics and thus remain indistinguishable even with unlimited noiseless data~\cite{bellman_structural_1970,chis_structural_2011,meshkat_algorithm_2009}. Practical identifiability, by contrast, concerns whether parameters can be reliably estimated from the finite and noisy data available in practice~\cite{raue_structural_2009,simpson_parameter_2026,wieland_structural_2021}. Structural identifiability provides the theoretical foundation for parameter inference, and hence it should be assessed before both a practical identifiability analysis and model inference.

The shift from parameter inference to functional inference raises a natural analogue of this classical problem. We recently introduced \emph{functional identifiability} as a framework for determining whether unknown functions can be uniquely recovered from data~\cite{loman_functional_2025}. Here we develop a theory of \emph{structural functional identifiability} for ordinary differential equation (ODE) models containing unknown functions. Our contributions are fourfold. First, we identify broad classes of models in which functional non-identifiability arises naturally and show that, in these cases, hybrid models are theoretically equivalent to neural differential equations with entirely unknown right-hand sides~\cite{chen_neural_2019,kidger_neural_2020}. Second, we extend differential algebra-based methods to assess functional identifiability. Third, we demonstrate that structural functional identifiability exhibits phenomena with no direct analogue in the parametric setting, including entanglement between unknown functions and parameters, entanglement between multiple unknown functions, and potential non-identifiability where the single unknown component is a function. Finally, we illustrate these methods on commonly used model types, including a chemical reaction network model and the Lotka--Volterra model.


\section{Structural functional identifiability: definition and scope of this work} \label{sec:functional_identifiability_definition}

Structural functional identifiability asks whether an unknown function can, in theory, be determined from perfect observations of a model's output. Consider an ODE model
\begin{align*}
    \dot{\vec{X}}(t)&= \vec{F}(\vec{X};\, \vec{f},\, \vec{p}), \\
    \vec{Y}(t)&=\vec{H}(\vec{X}(t)),
\end{align*}
where $\vec{X}\in\mathbb{R}^n$ is the state vector, $\vec{Y}$ is the observed output, $\vec{F}$ and $\vec{H}$ are known functions, $\vec{p}\in\mathbb{R}^m$ is a vector of unknown scalar parameters, and $\vec{f}=(f_1,\ldots,f_k)$ is a collection of unknown functions with
$f_j:\mathcal{U}_j\to\mathbb{R}$  and $\mathcal{U}_j\subseteq\vec{X}$. Two choices of the unknowns, $(\vec{f}_1,\vec{p}_1)$ and $(\vec{f}_2,\vec{p}_2)$, are said to give the same observed outputs if they produce the same output trajectory $\vec{Y}(t)$ for every initial condition consistent with prior knowledge of the initial conditions. Often, the known initial conditions are simply $\vec{Y}(0)$, however, additional initial condition knowledge is possible, and is briefly considered in the example in Section~\ref{sec:diffalg_two_parameter}. Throughout this work, we will assume that $\vec{Y}$ consists of some or all of the state variables in $\vec{X}$. More complicated observable formulae, however, would also be possible to consider.

\begin{definition} \label{def:functional_identifiability}
Fix a generic choice of the unknowns $(\vec{f}_1,\vec{p}_1)$. An unknown function $f_j$ is:
\begin{itemize}
    \item \emph{Globally structurally identifiable} if every choice
    of unknowns $(\vec{f}_2,\vec{p}_2)$ that gives the same observed outputs as
    $(\vec{f}_1,\vec{p}_1)$ has $f_{1,j}=f_{2,j}$.
    \item \emph{Locally structurally identifiable} if there exists
    $\varepsilon>0$ such that every choice of unknowns $(\vec{f}_2,\vec{p}_2)$ that gives the
    same observed outputs as $(\vec{f}_1,\vec{p}_1)$ and satisfies
    $\sup_{\vec{u}\in\mathcal{U}_j}|f_{1,j}(\vec{u})-f_{2,j}(\vec{u})|<\varepsilon$
    must have $f_{1,j}=f_{2,j}$. Often, but not always, this is equivalent to there only being a finite number of functions $f_j$ giving the same observed dynamics.
    \item \emph{Structurally non-identifiable} otherwise.
\end{itemize}
\end{definition}

\noindent
Throughout this work we will mainly consider global structural identifiability, for simplicity termed structural identifiability unless otherwise mentioned. Local structural identifiability will be considered in more detail in Section~\ref{sec:local_global}.

Determining whether a model's components are structurally identifiable has both practical and theoretical applications. Primarily, it is used to determine what model components can be recovered from data. However, it can also guide experimental design: by investigating how structural identifiability depends on the observed quantities, one can better determine what system components to measure to maximise identifiability.

This paper is laid out as follows. Section~\ref{sec:general_nonident} identifies several broad classes of models for which structural functional non-identifiability holds generally. The aim is not to provide a comprehensive catalogue of non-identifiable models, but to highlight common cases where functional recovery is an ill-posed problem. Section~\ref{sec:diffalg} then illustrates, through a few examples, how differential algebra techniques can be used to determine functional identifiability in ODE models. This approach applies beyond the special forms considered in Section~\ref{sec:general_nonident}, and can, therefore, handle models with diverse algebraic structures. Section~\ref{sec:nonidentifiability_types} uses the same approach to distinguish different forms of functional non-identifiability that can be encountered, considering both global and local structural identifiability. Finally, Section~\ref{sec:applications} applies the differential algebra approach to two common model categories, including cases where unknown functions appear multiple times in the same model or depend on multiple variables.


\section{Structural non-identifiability for general classes of models} \label{sec:general_nonident}

In this section, we identify broad classes of ODE models for which unique functional recovery is structurally impossible. For each class, we construct an explicit family of functions that produces identical observable dynamics and is, therefore, functionally indistinguishable. Throughout, we assume that all state variables are observable, so that $\vec{Y}(t)=\vec{X}(t)$. Consequently, the same non-identifiability results also apply when only a subset of variables is observed. Proofs of each result are provided in Appendix~\ref{appendix:proofs_section2}. We note that the equational forms considered here are by no means exhaustive (for example, here unknown parameters and functions occur in separate terms, which is not required). In the next section, we will show how to determine structural identifiability more generally.


\subsection{Scalar ODE models} \label{sec:nonident_1var_UDEs}

Consider a scalar ODE model containing both known and unknown dynamics:
\begin{equation} \label{eq:nonident_1var_UDEs}
    \dot{X} = f(X) + g(X;\vec{p}),
\end{equation}
where $f:\mathbb{R}\to\mathbb{R}$ is an unknown function and $g:\mathbb{R}\to\mathbb{R}$ is a known functional form depending on a vector $\vec{p}$ with $|\vec{p}|>0$ (i.e. at least one unknown parameter is present).

\begin{proposition} \label{prop:1var_nonident}
The unknown function $f$ is structurally non-identifiable. Specifically, given any admissible pair $(f_1, \vec{p}_1)$, any pair $(f_2, \vec{p}_2)$ of the form
\begin{align}    
    \vec{p}_2 &\in \mathbb{R}^{|\vec{p}|}, \\
    f_2(X) &= f_1(X) + g(X;\vec{p}_1) - g(X;\vec{p}_2), \label{eq:1var_nonident_expr}
\end{align}
yields identical dynamics to $(f_1, \vec{p}_1)$. 
\end{proposition}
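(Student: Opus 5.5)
The plan is a direct substitution argument. It rests on the observation that, with $\vec{Y}(t)=X(t)$, two choices of unknowns give the same observed outputs whenever they define the same vector field. In that case every initial condition produces the same solution.

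First, for an arbitrary $\vec{p}_2\in\mathbb{R}^{|\vec{p}|}$, I would define $f_2$ by \eqref{eq:1var_nonident_expr}. Substituting into the right-hand side of \eqref{eq:nonident_1var_UDEs} gives, for every $X$,
\begin{equation*}
f_2(X)+g(X;\vec{p}_2) = f_1(X)+g(X;\vec{p}_1)-g(X;\vec{p}_2)+g(X;\vec{p}_2) = f_1(X)+g(X;\vec{p}_1).
\end{equation*}
So the two models have pointwise identical right-hand sides.

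Second, I would argue that identical right-hand sides give identical trajectories. Any solution $X(t)$ of the system with $(f_1,\vec{p}_1)$ from a given $X(0)$ is also a solution of the system with $(f_2,\vec{p}_2)$ from the same $X(0)$, and conversely. No uniqueness theorem is needed for this: the two ODEs are literally the same equation, so their solution sets coincide. Hence the observed outputs agree for every admissible initial condition, as the definition preceding Definition~\ref{def:functional_identifiability} requires.

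Third, I would conclude non-identifiability by showing that $f_2\neq f_1$ for some choice of $\vec{p}_2$. This step is the only one with any content, and I expect it to be the main obstacle. The pair $(f_2,\vec{p}_2)$ differs in $f$ exactly when $g(\cdot;\vec{p}_2)\neq g(\cdot;\vec{p}_1)$ as functions. Since $|\vec{p}|>0$ and the parameters are genuine unknowns, meaning $g$ actually depends on $\vec{p}$, there is some $\vec{p}_2$ for which this holds. That gives a second function $f_2\ne f_1$ producing the same outputs, so $f$ is structurally non-identifiable in the global sense of Definition~\ref{def:functional_identifiability}.

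I would also note that non-identifiability holds in the local sense. Assuming $g$ is continuous in $\vec{p}$ uniformly over the relevant domain, choosing $\vec{p}_2$ arbitrarily close to $\vec{p}_1$ yields $f_2$ within any $\varepsilon$ of $f_1$ in the sup norm. The degenerate case in which $g$ does not depend on $\vec{p}$ would be excluded explicitly as a standing assumption.
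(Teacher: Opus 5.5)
Your proposal is correct and matches the paper's proof, which is the same direct substitution showing $f_2(X)+g(X;\vec{p}_2)=f_1(X)+g(X;\vec{p}_1)$, followed by the observation that $\vec{p}_2$ can be chosen freely. You also require $g(\cdot;\vec{p}_2)\neq g(\cdot;\vec{p}_1)$ for some $\vec{p}_2$ so that $f_2\neq f_1$, which makes explicit an assumption the paper leaves implicit; your local-sense remark rightly needs the uniformity hypothesis you state, since for $g(X;d)=-dX$ on all of $\mathbb{R}$ the sup-norm distance between $f_1$ and $f_2$ is infinite whenever $d_2\neq d_1$.
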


The non-identifiability arises because the unknown function $f$ can absorb arbitrary changes in the parametric term $g(X;\vec{p})$. Observable data therefore constrain only the combined dynamics $f(X)+g(X;\vec{p})$, rather than the individual contributions of the known and unknown components. Consequently, the mechanistic component $g$ and the learned component $f$ cannot be disentangled from observations alone: any pair $(f_2,\vec{p}_2)$ belonging to the family described by Proposition~\ref{prop:1var_nonident} yields identical dynamics, making simultaneous recovery of the parameter values and the unknown function structurally impossible. Example models satisfying the conditions of Proposition~\ref{prop:1var_nonident} are provided in Appendix~\ref{appendix:examples_1var}, with Figure~\ref{fig:single_var_nonident} providing a simple example.

\begin{figure}
    \centering
    \includegraphics[width=0.90\linewidth]{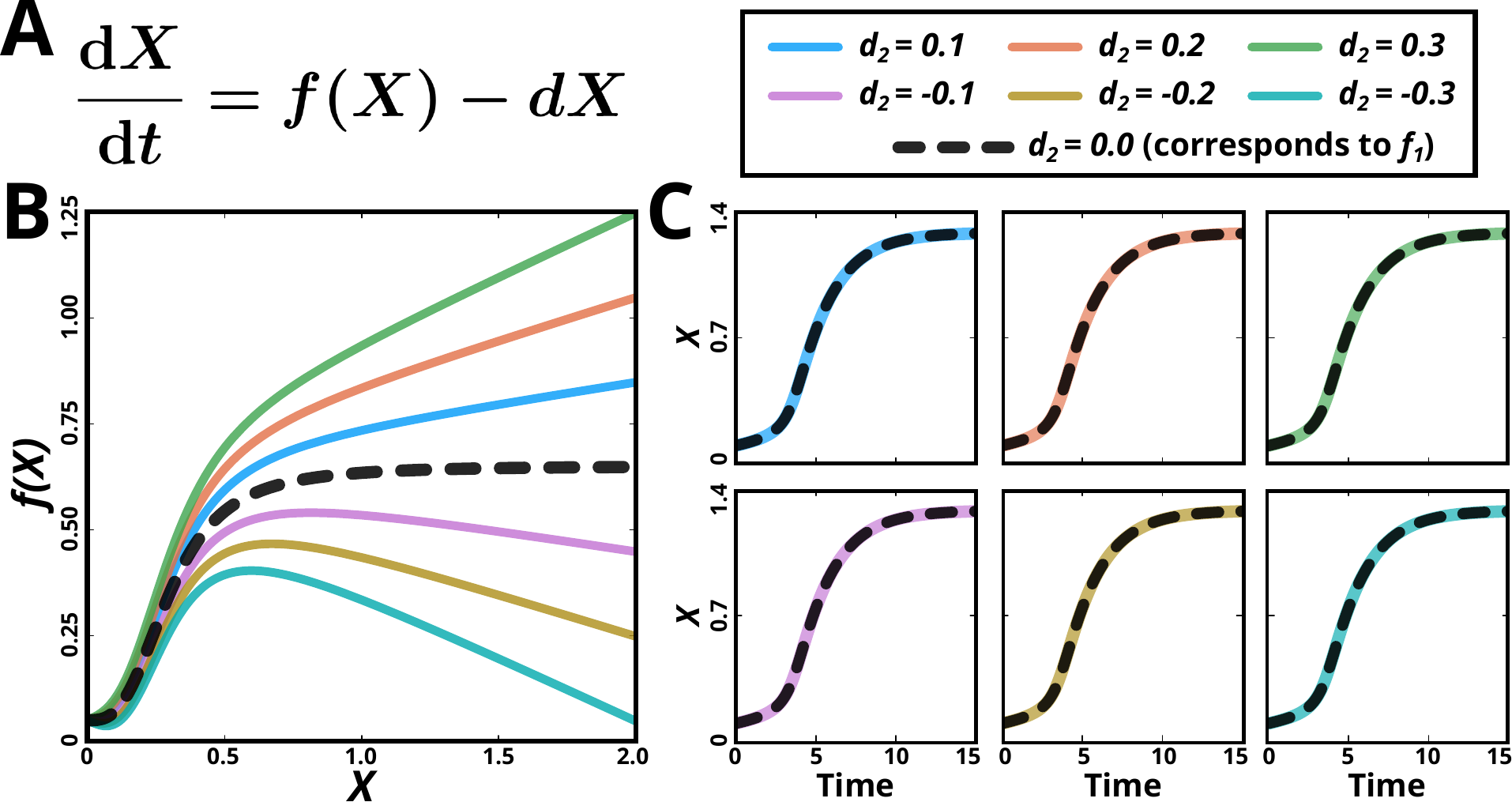}
    \caption{\textbf{A simple self-activation loop model is inherently structurally non-identifiable.} (A) A self-activation loop where a single component, $X$, activates its own production according to an unknown function $f(X)$ and decays at a linear (unknown) rate $d$. According to Proposition~\ref{prop:1var_nonident}, this model is structurally non-identifiable. The reason is that, given any base pair $(f_1(X),d_1)$, then any alternative pair $d_2 \in \mathbb{R}$, $f_2(X) = f_1(X) + (d_2 - d_1)X$ will yield identical dynamics. (B) Using a sigmoidal base function $f_1(X)$ (black dashed line), we plot potential functions $f_2(X)$ for different choices of $d_2$(coloured lines). (C) For each pair $(f_2(x),d_2)$ in B, we simulate the ODE in A (coloured lines) and compare these to the simulation for $(f_1(x),d_1)$. These simulations confirm that the different model alternatives all yield identical dynamics. While we here only consider a single initial condition, Proposition~\ref{prop:1var_nonident} tells us that this will hold for any initial condition.}
    \label{fig:single_var_nonident}
\end{figure}


\subsection{Partially augmented systems of ODEs} \label{sec:nonident_partial_system_UDEs}

The previous example can be extended to systems of ODEs by adding additional variables not directly involved in the non-identifiability relation. As a concrete example, consider an oscillator with position $X$ and velocity $\dot{X}$,
\begin{align*}
    \dot{X} &= X, \\
    \ddot{X} &= f(X) + kX - c\dot{X},
\end{align*}
where $f:\mathbb{R}\to\mathbb{R}$ is an unknown restoring-force contribution and $k>0$ and $c>0$ are unknown stiffness and damping parameters. The acceleration depends on the sum $f(X)+kX$, where changes $k$ can be offset by tuning a linear term in $f(X)$.

We can generalise this model structure by considering the following system in which one equation contains an unknown function:
\begin{align}
    \dot{X}_1 &= f(\vec{u}) + g_1(\vec{X};\vec{p}_1),  \label{eq:nonident_partial_system_UDEs} \\
    \dot{X}_i &= g_i(\vec{X};\vec{p}_i), \qquad i = 2,\ldots,n, \notag
\end{align}
{where $\vec{u} \subseteq \vec{X}$, $f$ is unknown, the $g_i$ are known functional forms depending on unknown parameter sets $\vec{p}_i$, and $|\vec{p}_1| > 0$. Suppose further that $g_1$ can be decomposed as
\begin{equation*}
    g_1(\vec{X};\vec{p}_1) = r(\vec{v};\vec{a}) + s(\vec{X};\vec{b}),
\end{equation*}
where $\vec{v} \subseteq \vec{u}$, $\vec{a} \cap \vec{b} = \emptyset$, $\vec{a} \cap \vec{p}_i = \emptyset$ for all $i > 1$, and $|\vec{a}| > 0$. In practice, this means that the known part of the first equation contains a parameter-dependent term acting on variables that are arguments of the unknown function $f$. Furthermore, the parameters of this term cannot occur in any other equation.

\begin{proposition} \label{prop:multivar_nonident}
Under these conditions, $f$ is structurally non-identifiable. Specifically, given any admissible pair $(f_1, \vec{p}_{1,1}) \equiv (f_1, \{\vec{a}_{1}, \vec{b}\})$, any pair $(f_2, \vec{p}_{1,2}) \equiv (f_2, \{\vec{a}_{2}, \vec{b}\})$ of the form
\begin{align}
    \vec{a}_2 &\in \mathbb{R}^{|\vec{a}|}, \notag \\
    \label{eq:multivar_nonident_expr}
    f_2(\vec{u}) &= f_1(\vec{u}) + r(\vec{v};\vec{a}_1) - r(\vec{v};\vec{a}_2),
\end{align}
yields identical dynamics to $(f_1, \vec{p}_{1,1})$.
\end{proposition}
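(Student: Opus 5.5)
The plan is to show directly that the two unknown-pairs produce the same vector field $\vec{F}$. Since $\vec{Y}=\vec{X}$ is fully observed and the initial condition is fixed, identical right-hand sides give identical trajectories, and so identical outputs, for every initial condition. This is the same strategy I would expect for Proposition~\ref{prop:1var_nonident}, now carried out equation by equation.

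\textbf{Step 1: the first equation.} Substituting $(f_2,\{\vec{a}_2,\vec{b}\})$ into \eqref{eq:nonident_partial_system_UDEs} and using the decomposition of $g_1$ together with \eqref{eq:multivar_nonident_expr} gives
\begin{align*}
f_2(\vec{u}) + r(\vec{v};\vec{a}_2) + s(\vec{X};\vec{b}) &= f_1(\vec{u}) + r(\vec{v};\vec{a}_1) - r(\vec{v};\vec{a}_2) + r(\vec{v};\vec{a}_2) + s(\vec{X};\vec{b}) \\
&= f_1(\vec{u}) + g_1(\vec{X};\{\vec{a}_1,\vec{b}\}).
\end{align*}
So the first component of the vector field is unchanged.

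\textbf{Step 2: the remaining equations.} For $i\ge 2$, the right-hand sides $g_i(\vec{X};\vec{p}_i)$ are untouched. Only $\vec{a}$ was altered, and $\vec{a}\cap\vec{p}_i=\emptyset$ for all $i>1$. The disjointness $\vec{a}\cap\vec{b}=\emptyset$ guarantees that keeping $\vec{b}$ fixed is consistent with changing $\vec{a}$ freely, and we keep every other parameter fixed. Hence the full vector field $\vec{F}$ is identical for both choices.

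\textbf{Step 3: trajectories and non-identifiability.}
\begin{itemize}
\item Identical vector fields with the same initial condition yield identical solutions. Assuming enough regularity for uniqueness (e.g.\ local Lipschitz continuity, implicit in the admissibility of $f_1$), this holds on the common interval of existence, so the observed outputs coincide.
\item The argument of $f_2$ is well defined as a function of $\vec{u}$ alone precisely because $\vec{v}\subseteq\vec{u}$. This is the only point where that hypothesis enters, and it is the step needing care: otherwise $f_2$ would not belong to the admissible class $f:\mathcal{U}\to\mathbb{R}$.
\item Finally, because $|\vec{a}|>0$, one can pick $\vec{a}_2$ with $r(\cdot;\vec{a}_1)\neq r(\cdot;\vec{a}_2)$, so that $f_2\neq f_1$. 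This gives structural non-identifiability under Definition~\ref{def:functional_identifiability}.
\end{itemize}

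\textbf{Expected obstacle.} The last point is the main subtlety. If $r$ did not genuinely depend on $\vec{a}$, then $f_2=f_1$ for every $\vec{a}_2$ and non-identifiability would fail. So I would state, implicitly or explicitly, that $r$ depends non-trivially on $\vec{a}$ for generic values, as is intended in the setting.
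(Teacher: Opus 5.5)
Your proposal is correct and follows essentially the same route as the paper. The paper substitutes $(f_2,\{\vec{a}_2,\vec{b}\})$ into the first equation so that the $r(\vec{v};\vec{a}_2)$ terms cancel, notes that the remaining equations are untouched because neither $f$ nor $\vec{a}$ appears in them, and concludes non-identifiability from the free choice of $\vec{a}_2$. Your extra remarks are sensible refinements the paper leaves implicit: ODE uniqueness, the role of $\vec{v}\subseteq\vec{u}$ in making $f_2$ a function of $\vec{u}$, and the need for $r$ to depend non-trivially on $\vec{a}$ so that $f_2\neq f_1$.
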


As in Proposition~\ref{prop:1var_nonident}, the non-identifiability arises because changes in the known component of the equation for $X_1$ can be absorbed into the unknown function without altering the observable dynamics. Example models satisfying the conditions of Proposition~\ref{prop:multivar_nonident} are provided in Appendix~\ref{appendix:examples_multivar}. We note that this result is only guaranteed to hold when $f$ appears only in the equation for $X_1$; occurrences of $f$ in the equations for $X_2,\ldots,X_n$ can introduce additional constraints and may restore identifiability, as shown in Appendix~\ref{appendix:repeated_function_counterexample}. 


\subsection{Fully augmented systems of ODEs} \label{sec:nonident_complete_UDEs}

Finally, we consider the case of a system of fully augmented ODEs where every equation contains an additive unknown function of the full state vector:
\begin{equation}
    \label{eq:complete_UDE}
    \dot{X}_i = f_i(\vec{X}) + g_i(\vec{X};\vec{p}_i), \qquad i = 1,\ldots,n,
\end{equation}
where the $f_i$ are equation-specific, and $|\vec{p}_i|>0$, that is, every equation contains at least one unknown parameter. Note that the different $\vec{p}_i$ need not be disjoint, and may overlap partially or completely.

\begin{proposition} \label{prop:complete_UDE_nonident}
All unknown functions $f_i$ are structurally non-identifiable. Specifically, given any admissible pair $(f_{i,1}, \vec{p}_{i,1})$, any pair of the form
\begin{align}
    \vec{p}_{i,2} &\in \mathbb{R}^{|\vec{p}_i|}, \notag \\
    \label{eq:complete_UDE_nonident_expr}
    f_{i,2}(\vec{X}) &= f_{i,1}(\vec{X}) + g_i(\vec{X};\vec{p}_{i,1}) - g_i(\vec{X};\vec{p}_{i,2}),
\end{align}
with the choices of $\vec{p}_{i,2}$ required to be consistent on any overlaps between the $\vec{p}_i$, yields dynamics identical to $(f_{i,1}, \vec{p}_{i,1})$ for all $i$.
\end{proposition}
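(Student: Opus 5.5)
The argument is a direct substitution, in the same style as Propositions~\ref{prop:1var_nonident} and~\ref{prop:multivar_nonident}. We show that the alternative unknowns give a right-hand side of \eqref{eq:complete_UDE} that is pointwise identical to the original one. Identical outputs then follow from uniqueness of solutions to the initial value problem.

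\textbf{Step 1: fix the alternative unknowns.} Take any admissible base choice $(f_{i,1},\vec{p}_{i,1})_{i=1}^n$. Choose any global parameter vector $\vec{p}_2$, and let $\vec{p}_{i,2}$ be its restriction to the parameters appearing in $g_i$. This restriction is what enforces consistency on overlaps between the $\vec{p}_i$. Define $f_{i,2}$ by \eqref{eq:complete_UDE_nonident_expr}.

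\textbf{Step 2: compare the vector fields.} For every $\vec{X}$ and every $i$,
\begin{equation*}
f_{i,2}(\vec{X})+g_i(\vec{X};\vec{p}_{i,2}) = f_{i,1}(\vec{X})+g_i(\vec{X};\vec{p}_{i,1}).
\end{equation*}
So the two parameterisations define the same vector field $\vec{F}$ on $\mathbb{R}^n$.

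\textbf{Step 3: conclude equal outputs.} Fix any initial condition $\vec{X}(0)$; since $\vec{Y}=\vec{X}$, this is all the prior knowledge. Both systems are the same ODE with the same initial data. Assuming $\vec{F}$ is locally Lipschitz, as for any admissible choice of unknowns, the Picard--Lindel\"of theorem gives identical solutions on the common interval of existence. Hence the outputs $\vec{Y}(t)$ coincide for all $t$.

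\textbf{Step 4: non-identifiability.} Because every $|\vec{p}_i|>0$, each $g_i$ genuinely depends on at least one parameter. For a generic base choice, we can therefore pick $\vec{p}_{i,2}$ with $g_i(\cdot;\vec{p}_{i,2})\neq g_i(\cdot;\vec{p}_{i,1})$, which gives $f_{i,2}\neq f_{i,1}$. Varying $\vec{p}_2$ continuously then produces alternatives arbitrarily close to $f_{i,1}$ in sup norm on compact domains. This rules out global identifiability, and also local identifiability in the sense of Definition~\ref{def:functional_identifiability}.

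\textbf{Main obstacle.} The only step that needs care is the overlap consistency in Step 1. When parameters are shared across equations, each $\vec{p}_{i,2}$ must be the restriction of a single global vector, and we must check that changing a shared parameter can still alter every $g_i$ that depends on it. Since each $f_i$ absorbs its own change independently, shared parameters cause no conflict. The remaining technicality is nondegeneracy: we need $\vec{p}\mapsto g_i(\cdot;\vec{p})$ to be non-constant, which we read as part of what it means for a parameter to be present in $g_i$.
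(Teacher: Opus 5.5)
Your proposal is correct and is essentially the paper's own argument. You substitute the alternative unknowns so that each right-hand side $f_{i,2}+g_i(\cdot;\vec{p}_{i,2})$ collapses to $f_{i,1}+g_i(\cdot;\vec{p}_{i,1})$, handle overlaps by using one consistent parameter vector, and use the free choice of $\vec{p}_{i,2}$ to get infinitely many alternatives; your Picard--Lindel\"of step and explicit nondegeneracy assumption only make explicit what the paper leaves implicit.

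The one overreach is your Step 4 claim about ruling out local identifiability. Closeness in sup norm on compact sets does not give closeness in the Definition's $\sup_{\vec{u}\in\mathcal{U}_j}$ when $\mathcal{U}_j$ is unbounded: for $g_i=-dX_i$, the difference $(d_2-d_1)X_i$ has infinite sup norm. The paper does not attempt that claim and rests non-identifiability only on the existence of infinitely many alternatives.
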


For example, a microbial biomass $X$ growing on a nutrient $S$ could be written
\begin{align*}
    \dot{X} &= f_X(X,S) - dX - cX^2, \\
    \dot{S} &= -f_S(X,S) - \lambda S,
\end{align*}
where $f_X:\mathbb{R}^2\to\mathbb{R}$ and $f_S:\mathbb{R}^2\to\mathbb{R}$ are unknown growth and nutrient-consumption laws, while $d$, $c$, and $\lambda$ are unknown decay parameters. Even with both $X$ and $S$ observed, $d$ and $c$ can be tuned arbitrarily with $f_X$ being adjusted accordingly to preserve the dynamics of $X$, and similarly for $f_S$ and $\lambda$. Further examples are provided in Appendix~\ref{appendix:examples_complete}. 


\subsection{Neural ODEs} \label{sec:nonident_neural_odes}

Finally, we consider a system of ODEs where each right-hand side is an unknown function of the full state space, i.e. no mechanistic knowledge is provided. These models are also known as \emph{neural ODEs}~\cite{chen_neural_2019,kidger_neural_2020}, and can be written as 
\begin{equation} \label{eq:nonident_neural_odes}
    \dot{X}_i = f_i(\vec{X}),
\end{equation}
where each $f_i$ is an unknown function.

\begin{proposition} \label{prop:nonident_neural_odes}
Neural ODEs are structurally identifiable if and only if the full state vector, $\vec{X}$, is observable.
\end{proposition}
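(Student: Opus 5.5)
The proof splits into the two directions of the biconditional. In both, "structurally identifiable" means that every $f_i$ is identifiable in the sense of Definition~\ref{def:functional_identifiability}. The identifiability is understood on the region of state space actually visited by trajectories, which is all of the domain since every initial condition $\vec{X}(0)$ is allowed.

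\emph{Sufficiency.} Suppose $\vec{Y}(t)=\vec{X}(t)$. Take two choices $\vec{f}_1$ and $\vec{f}_2$ that give the same observed output for every admissible initial condition. Since $\vec{Y}(0)=\vec{X}(0)$ is known, fix an arbitrary point $\vec{x}_0$ and take the trajectory starting there. Both models then produce the same $\vec{X}(t)$. Differentiating at $t=0$ gives $f_{1,i}(\vec{x}_0)=\dot{X}_i(0)=f_{2,i}(\vec{x}_0)$ for each $i$. Because $\vec{x}_0$ was arbitrary, $f_{1,i}=f_{2,i}$ on the whole domain, so every $f_i$ is globally identifiable. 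The only technical point is that the solutions must exist and be differentiable at $t=0$. I would take this as part of admissibility, for example by requiring the $f_i$ to be locally Lipschitz.

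\emph{Necessity.} Suppose some state, say $X_n$, is not observed, so $\vec{Y}$ is a strict subset of the states. I would exhibit a family of reparametrisations of the hidden coordinate that leaves $\vec{Y}$ unchanged. Let $\phi:\mathbb{R}\to\mathbb{R}$ be any smooth diffeomorphism, for instance $\phi(x)=x+c$ with $c\neq 0$, or $\phi(x)=\lambda x$ with $\lambda\neq 1$. Define new states $\tilde{X}_i=X_i$ for $i<n$ and $\tilde{X}_n=\phi(X_n)$. By the chain rule these satisfy a neural ODE with
\begin{align*}
f_{2,i}(\tilde{\vec{X}})&=f_{1,i}(\tilde X_1,\ldots,\tilde X_{n-1},\phi^{-1}(\tilde X_n)),\qquad i<n,\\
f_{2,n}(\tilde{\vec{X}})&=\phi'(\phi^{-1}(\tilde X_n))\,f_{1,n}(\tilde X_1,\ldots,\tilde X_{n-1},\phi^{-1}(\tilde X_n)).
\end{align*}
This is again a neural ODE, because the right-hand sides are entirely unknown. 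Its observed components coincide with those of the original model. The initial conditions consistent with prior knowledge specify only $\vec{Y}(0)$, so the unobserved initial value ranges over all of $\mathbb{R}$ in both models, and $\phi$ maps the set of compatible trajectories bijectively onto itself. Hence the two models produce the same set of output trajectories for every admissible initial condition.

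The main obstacle is showing that $\vec{f}_2\neq\vec{f}_1$ for a generic $\vec{f}_1$, and even that the difference can be made small, which is needed to rule out local identifiability as well. I would use the shift $\phi(x)=x+c$. Then $f_{2,i}(\vec{X})=f_{1,i}(X_1,\ldots,X_n-c)$, and this differs from $f_{1,i}$ unless $f_{1,i}$ is $c$-periodic in $X_n$. Generically, at least one $f_{1,i}$ depends non-periodically on $X_n$; otherwise $X_n$ would not influence anything, and one can argue it is then trivially non-identifiable anyway. For uniformly continuous $f_{1,i}$, letting $c\to 0$ makes $\sup|f_{1,i}-f_{2,i}|$ arbitrarily small while the two functions stay distinct. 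This yields a continuum of indistinguishable models, so the model is neither globally nor locally identifiable, and the equivalence follows.
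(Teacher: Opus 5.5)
Your proof is correct and follows the paper's route. Sufficiency reads $f_{1,i}(\vec{x}_0)=\dot X_i(0)=f_{2,i}(\vec{x}_0)$ off fully observed trajectories, and necessity uses the constant shift of the hidden coordinate, $\tilde X_n = X_n + c$ with $f_{2,i}(\vec X)=f_{1,i}(X_1,\ldots,X_n-c)$, which is exactly the paper's construction with $k=c$. Your extras (the general diffeomorphism $\phi$, the genericity caveat that some $f_{1,i}$ must depend non-trivially on the hidden state, and the small-$c$ argument against local identifiability) go beyond what the paper addresses, but they are not needed: the proposition concerns global identifiability, and your local claim is established only for uniformly continuous $f_{1,i}$, as you yourself note.
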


This result suggests that, for fully observable ODE systems, the full right-hand side is always structurally identifiable. This is particularly notable, as a neural ODE is the baseline, fully data-driven, case where no mechanistic knowledge is provided. Here, in several non-identifiable models (including those described in Section~\ref{sec:nonident_1var_UDEs} and Section~\ref{sec:nonident_complete_UDEs}), the identifiable expression(s) are exactly the set of ODE right-hand side expressions. This implies that, for these model cases, when a hybrid model is proposed, no actual mechanistic knowledge is provided. Here, fitting the ODE model incorporating unknown functions to data is equivalent to fitting a fully data-driven neural ODE. This highlights a practical role for structural functional identifiability: determining when mechanistic assumptions genuinely provide identifiable information and when they do not.

The results in this section identify broad settings where functional recovery fails by construction. Next, we will describe how the differential algebra approach provides a practical framework for determining whether generic, partially observed, models are structurally identifiable.


\section{Determining structural identifiability via the differential algebra approach} \label{sec:diffalg}


Differential algebra provides a way to remove unobserved state variables from an ODE model through repeated differentiation and substitution. The result is an \emph{input-output equation}: a differential equation involving only observed variables, their derivatives, and the unknown functions and parameters. If two choices of the unknowns generate the same input-output equation, then they cannot be distinguished given the observed dynamics (implying structural non-identifiability). Crucially, structural identifiability considers the case of arbitrary amounts of noiseless data, which implies that each observable and each of their differentials can be observed independently. In practice, this means that one can consider each term of the input-output equation in isolation, from which identifiable quantities can be recovered.

Consider, for example, the following system of ODEs,
\begin{equation*}
    \dot X_1=-aX_1+bX_2,\qquad
    \dot X_2=cX_1-dX_2,
\end{equation*}
with only $X_1$ observed and where $a$, $b$, $c$, and $d$ are unknown parameters. Eliminating the unobserved variable $X_2$ by differentiating and substituting gives the input-output equation
\begin{equation*}
    \ddot X_1+(a+d)\dot X_1+(ad-bc)X_1=0.
\end{equation*}
Here, $\dot{X_1}$ can, in the theoretical case of ideal data, be varied independently of $\ddot{X_1}$ and $X_1$. From this, the sum $(a+d)$ can be identified. Similarly, $ad-bc$ is identifiable. However, as multiple choices of $a$, $b$, $c$, and $d$ preserve these coefficients, the individual parameters are not structurally identifiable from observations of $X_1$ alone. If instead $d$ is known, $a$ is identifiable (as well as the product $bc$, but not the individual parameters $b$ and $c$).


\subsection{Single-parameter mutual activation loop model} \label{sec:diffalg_one_parameter}

To demonstrate how differential algebra can be applied to determine functional identifiability, we first consider a mutual activation loop model. Here, two variables mutually activate each other and decay at the same constant rate:
\begin{align}
    \dot{X}_1 &= f(X_2) - dX_1, \label{eq:diffalg_model1_dXdt} \\
    \dot{X}_2 &= X_1 - dX_2,   \label{eq:diffalg_model1_dYdt}
\end{align}
with $f:\mathbb{R}\to\mathbb{R}$ an unknown function and $d$ an unknown parameter.

\begin{proposition} \label{prop:diffalg_one_parameter_identifiability}\label{prop:diffalg_case1}\label{prop:diffalg_case2}
In the single-parameter mutual activation model, Equations~\eqref{eq:diffalg_model1_dXdt}--\eqref{eq:diffalg_model1_dYdt}, both $f$ and $d$ are structurally identifiable in either of the following cases:
\begin{enumerate}
    \item Only $X_2$ is observable.
    \item Only $X_1$ is observable, provided $f$ is locally invertible and $d \neq 0$.
\end{enumerate}
\end{proposition}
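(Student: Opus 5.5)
We derive the input-output equation in each observation case by eliminating the unobserved variable, then read off the identifiable quantities term by term, using the principle from the start of this section that, with ideal data, the observable and its derivatives can be varied independently.

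\emph{Case 1 (only $X_2$ observed).} From Equation~\eqref{eq:diffalg_model1_dYdt} we have $X_1=\dot X_2+dX_2$. Differentiating and substituting into Equation~\eqref{eq:diffalg_model1_dXdt} gives
\begin{equation*}
\ddot X_2+2d\dot X_2+d^2X_2=f(X_2).
\end{equation*}
Suppose $(f_1,d_1)$ and $(f_2,d_2)$ give the same output. Subtracting the two input-output equations yields
\begin{equation*}
2(d_1-d_2)\dot X_2+(d_1^2-d_2^2)X_2=f_1(X_2)-f_2(X_2).
\end{equation*}
The initial conditions allow $X_2(0)$ and $\dot X_2(0)$, equivalently $X_1(0)$, to be chosen independently. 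So at a fixed value of $X_2$ the left side varies with $\dot X_2$ while the right side does not. This forces $d_1=d_2$, and then $f_1=f_2$ on the whole observed range of $X_2$.

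\emph{Case 2 (only $X_1$ observed).} Here $X_2$ cannot be isolated linearly, so we eliminate it through $f$. From Equation~\eqref{eq:diffalg_model1_dXdt}, $f(X_2)=\dot X_1+dX_1$. Differentiating gives $f'(X_2)\dot X_2=\ddot X_1+d\dot X_1$, and substituting $\dot X_2=X_1-dX_2$ gives
\begin{equation*}
f'(X_2)(X_1-dX_2)=\ddot X_1+d\dot X_1.
\end{equation*}
Local invertibility of $f$ lets us write $X_2=f^{-1}(\dot X_1+dX_1)$ locally, so the input-output equation is
\begin{equation*}
\ddot X_1+d\dot X_1=\Phi(\dot X_1+dX_1)\,(X_1-d\,f^{-1}(\dot X_1+dX_1)),
\end{equation*}
where $\Phi=f'\circ f^{-1}$.

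To identify $d$, I would introduce the variable $z=\dot X_1+dX_1$. In the variables $(z,X_1)$ the equation reads $\ddot X_1+d\dot X_1=\Phi(z)X_1-d\,\Phi(z)f^{-1}(z)$, which is affine in $X_1$ at fixed $z$.
\begin{itemize}
\item If a second model $(f_2,d_2)$ had $d_2\neq d_1$, its natural argument would be $z_2=z_1+(d_2-d_1)X_1$.
\item Because $(X_1,\dot X_1)$ can be varied independently, equality of the two equations as functions of $(X_1,\dot X_1)$ is required.
\item Holding $z_1$ fixed and varying $X_1$, the left sides differ by $(d_2-d_1)\dot X_1$, which equals $(d_2-d_1)(z_1-d_1X_1)$ and is affine in $X_1$.
\item Comparing structure in $X_1$, the right side of model 2 depends on $X_1$ both through $z_2$ and linearly.
\end{itemize}
Matching these should force $\Phi_2$ to be affine with specific constraints. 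The cleaner first attempt is to evaluate on the surface $X_1=d_1X_2$, i.e.\ $\dot X_2=0$, where model 1's right side vanishes. Model 2 must then satisfy $\ddot X_1+d_2\dot X_1=0$ there as well, and the hypothesis $d\neq0$ should prevent this from holding identically unless $d_2=d_1$.

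Once $d$ is fixed, $z$ is a known observable function. The coefficient of $X_1$ at fixed $z$ identifies $\Phi(z)=f'(f^{-1}(z))$, and the constant term identifies $\Phi(z)f^{-1}(z)$. Dividing gives $f^{-1}(z)$ wherever $\Phi(z)\neq0$, which holds by local invertibility, so $f$ is recovered on the observed range. The factor $d$ multiplying $\Phi f^{-1}$ is exactly why $d\neq0$ is needed: if $d=0$ only $f'\circ f^{-1}$ is identified, and $f$ is determined only up to a shift of its argument.

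The main obstacle is the step identifying $d$ in Case 2. It requires a careful argument that a different $d_2$, together with a compensating $f_2$, cannot reproduce the same relation among $(X_1,\dot X_1,\ddot X_1)$. I would handle it by a functional equation in $(z,X_1)$, using affineness in $X_1$ to extract constraints, and fall back on explicit differentiation with respect to $X_1$ at fixed $z$ if needed.
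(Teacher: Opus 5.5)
\textbf{Case 1 and recovering $f$.} Your Case~1 is the paper's argument. Your recovery of $f$ once $d$ is known is also sound: the $X_1$-coefficient at fixed $z$ gives $\Phi$, and the constant term gives $d\,\Phi f^{-1}$. So is your remark that $d=0$ leaves only an argument shift.

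\textbf{The gap.} It is the step you flag yourself: nothing in the proposal excludes $d_2\neq d_1$ in Case~2, and that is the substance of part~(ii). Your bullets stop at ``should force $\Phi_2$ to be affine''. The ``cleaner first attempt'' is incorrect as stated. On the curve where model~1 has $\dot X_2=0$, it is model~1's left side $\ddot X_1+d_1\dot X_1$ that vanishes. Model~2 therefore need only satisfy $\Phi_2(z_2)\bigl(X_1-d_2f_2^{-1}(z_2)\bigr)=(d_2-d_1)\dot X_1$ there, not $\ddot X_1+d_2\dot X_1=0$. That curve also depends on the unknown $f_1$. Nothing in your sketch shows how $d\neq0$ would rule out $d_2\neq d_1$; as your last paragraph notes, $d\neq 0$ is really needed only to recover $f$. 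Note too that an affine $\Phi$ does not make $f$ affine: $f(y)=e^y$ gives $\Phi(z)=z$.

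\textbf{What is needed.} You need the dichotomy ``either $d_1=d_2$ or $f$ is affine'', plus a separate treatment of the affine case.

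The paper gets this by equating the $\ddot X_1$-coefficients $g_1'(u_1)=g_2'(u_2)$ in the unnormalised form $X_1=g'(u)(\ddot X_1+d\dot X_1)+dg(u)$. It then sweeps the line $u_1=0$, along which $u_2$ covers $\mathbb{R}$ when $d_1\neq d_2$ (Lemma~\ref{lemma:g_prime_equality}). Finally it rules out the affine case by coefficient matching (Lemma~\ref{lemma:constant_g_prime}).

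In your normalised form $\ddot X_1=F(X_1,\dot X_1)$ that comparison yields nothing, because the $\ddot X_1$-coefficient is $1$ on both sides. (In general it also needs care, since on trajectories $\ddot X_1$ is itself a function of $(X_1,\dot X_1)$.) Your affine-in-$X_1$ observation gives the dichotomy directly, assuming $f$ is smooth enough to differentiate $\Phi$ twice.

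If $d_1\neq d_2$, use $p=z_1$ and $q=z_2$ as coordinates, so that $X_1=(q-p)/(d_2-d_1)$. Model~1 gives
\begin{equation*}
F=-d_1p+\bigl(d_1^2+\Phi_1(p)\bigr)\frac{q-p}{d_2-d_1}-d_1\Phi_1(p)f_1^{-1}(p),
\end{equation*}
so $\partial_qF=(d_1^2+\Phi_1(p))/(d_2-d_1)$ depends on $p$ only. Model~2 makes $F$ affine in $p$ at fixed $q$, so $\partial_p^2F=0$. Combining, $\partial_p^2\partial_qF=0$, which gives $\Phi_1''=0$; write $\Phi_1(p)=ap+b$.

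Since $(f_1^{-1})'=1/\Phi_1$, we have $(\Phi_1f_1^{-1})''=a/\Phi_1$. The condition $\partial_p^2F=0$ then reads $-2a/(d_2-d_1)-d_1a/\Phi_1(p)=0$, which forces $a=0$. So $f_1$ is affine, and by the symmetric argument so is $f_2$; write $f_i(y)=b_iy+c_i$.

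The input--output equations then become $\ddot X_1=-2d_i\dot X_1+(b_i-d_i^2)X_1+d_ic_i$. These must agree on an open set of $(X_1,\dot X_1)$, but their $\dot X_1$-coefficients differ, which is a contradiction. With $d_1=d_2$ established, your final paragraph completes the proof.
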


\begin{proof}[Proof of part (i)]
To eliminate $X_1$, we first isolate it in Equation~\eqref{eq:diffalg_model1_dYdt} and differentiate:
\begin{equation*}
    X_1= \dot{X}_2 + dX_2 \quad \Longrightarrow \quad \dot{X}_1 = \ddot{X}_2 + d\dot{X}_2.
\end{equation*}
Substituting into Equation~\eqref{eq:diffalg_model1_dXdt} gives the following input-output equation
\begin{equation}
    \label{eq:diffalg_model1_Yobs_dY2dt2}
    \ddot{X}_2 + 2d\dot{X}_2 + d^2 X_2 - f(X_2) = 0.
\end{equation}
Now suppose two pairs $(f_1, d_1)$ and $(f_2, d_2)$ yield identical $X_2$ dynamics. Inserting both into Equation~\eqref{eq:diffalg_model1_Yobs_dY2dt2}, isolating $\ddot{X_2}$, and then equating gives
\begin{equation}
    \label{eq:diffalg_model1_Yobs_equiveq}
    -2d_1\dot{X}_2 - d_1^2 X_2 + f_1(X_2) = \ddot{X}_2 = -2d_2\dot{X}_2 - d_2^2 X_2 + f_2(X_2).
\end{equation}
Here, under the assumption of an arbitrary amount of noiseless data, each of $X_2$'s derivatives can be varied independently, allowing us to separate Equation~\eqref{eq:diffalg_model1_Yobs_equiveq} into two independent equalities that must hold:
\begin{align}
    -2d_1\dot{X}_2       &= -2d_2\dot{X}_2,       \label{eq:diffalg_model1_Yobs_split1} \\
    -d_1^2 X_2 + f_1(X_2) &= -d_2^2 X_2 + f_2(X_2).  \label{eq:diffalg_model1_Yobs_split2}
\end{align}
Equation~\eqref{eq:diffalg_model1_Yobs_split1} immediately gives $d_1 = d_2 \equiv d$. Substituting into Equation~\eqref{eq:diffalg_model1_Yobs_split2} then gives $f_1(X_2) = f_2(X_2)$. This demonstrates that two different pairs $(f, d)$ cannot yield identical dynamics with respect to $X_2$, showing that both $d$ and $f$ are structurally identifiable.
\end{proof}

The proof of part~(ii) is analogous to the proof above, but with $X_2$ eliminated rather than $X_1$. Since $f$ has the unobserved variable $X_2$ as its argument, eliminating $X_2$ requires inverting $f$. Crucially, only local invertibility is needed; this means that identifiability can hold for functions such as $f(X_2)=\sin(X_2)$ that are not globally invertible. The proof of part~(ii) is given in Appendix~\ref{appendix:proof_case2}.


\subsection{Two-parameter mutual activation loop model} \label{sec:diffalg_two_parameter}

We next allow the two variables to have different decay rates, giving
\begin{align}
    \dot{X}_1 &= f(X_2) - cX_1, \label{eq:diffalg_model2_dXdt} \\
    \dot{X}_2 &= X_1 - dX_2,   \label{eq:diffalg_model2_dYdt}
\end{align}
where $f:\mathbb{R}\to\mathbb{R}$ is an unknown function and $c$ and $d$ are unknown parameters.

\begin{proposition} \label{prop:diffalg_two_parameter_identifiability}\label{prop:diffalg_case3}\label{prop:diffalg_case4}
In the two-parameter mutual activation model, Equations~\eqref{eq:diffalg_model2_dXdt}--\eqref{eq:diffalg_model2_dYdt}, the identifiability properties are as follows.
\begin{enumerate}
    \item If only $X_1$ is observable then, provided $f$ is locally invertible and $c \neq 0$: if $f^{-1}$ is non-affine, then $f$, $c$, and $d$ are globally structurally identifiable. If $f^{-1}$ is affine, then $c$, $d$, and $f$ are each only locally identifiable (the slope of $f^{-1}$, however, is globally identifiable).
    \item If only $X_2$ is observable then $f$ is structurally non-identifiable and only the quantities $c + d$ and $f(X_2) - cdX_2$ are identifiable. Specifically, given any admissible triplet $(f_1, c_1, d_1)$, any alternative choice of the form
\begin{align}
    \label{eq:diffalg_case4_nonident_expr}
    c_2 &\in\mathbb{R}, \notag  \\
    d_2    &= c_1 + d_1 - c_2, \notag  \\
    f_2(X_2) &= f_1(X_2) + (c_2 c_1 + c_2 d_1 - c_2^2 - c_1 d_1)X_2,
\end{align}
    yields dynamics identical to $(f_1, c_1, d_1)$ with respect to $X_2$. Here, specifically, the identifiable quantities are $c+d$ and $f(x) - cdx$
\end{enumerate}
\end{proposition}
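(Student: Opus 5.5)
For part (ii) I would follow the proof of Proposition~\ref{prop:diffalg_one_parameter_identifiability}(i). Isolating $X_1=\dot X_2+dX_2$ from Equation~\eqref{eq:diffalg_model2_dYdt}, differentiating, and substituting into Equation~\eqref{eq:diffalg_model2_dXdt} gives the input-output equation
\begin{equation*}
\ddot X_2+(c+d)\dot X_2+cdX_2-f(X_2)=0 .
\end{equation*}
Take two triples producing the same $X_2$ dynamics and isolate $\ddot X_2$ in each. Since $\dot X_2$ and $X_2$ can be varied independently, the two expressions split into two equalities:
\begin{equation*}
c_1+d_1=c_2+d_2,\qquad f_1(x)-c_1d_1x=f_2(x)-c_2d_2x .
\end{equation*}
These are exactly the identifiable quantities $c+d$ and $f(x)-cdx$. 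For the converse I would choose $c_2$ freely and set $d_2=c_1+d_1-c_2$. Then $c_2d_2-c_1d_1=c_2c_1+c_2d_1-c_2^2-c_1d_1$, which reproduces Equation~\eqref{eq:diffalg_case4_nonident_expr}, so both triples satisfy the same second-order equation. Identical outputs then follow by noting that $X_1(0)$ is unobserved. For a given $X_2(0),\dot X_2(0)$, the second triple uses $X_1(0)=\dot X_2(0)+d_2X_2(0)$, so the two models generate the same set of $X_2$ trajectories. Since $c_2$ is arbitrary, $f$ is non-identifiable.

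For part (i) I would eliminate $X_2$ instead. Let $g$ be a local inverse of $f$ and set $w=\dot X_1+cX_1$. Equation~\eqref{eq:diffalg_model2_dXdt} gives $X_2=g(w)$, and differentiating gives $\dot X_2=g'(w)(\ddot X_1+c\dot X_1)$. Substituting into Equation~\eqref{eq:diffalg_model2_dYdt} yields
\begin{equation*}
\ddot X_1=-c\dot X_1+\frac{X_1-d\,g(w)}{g'(w)} .
\end{equation*}
Here $g'\neq0$ by local invertibility. For two triples I equate the right-hand sides as functions of the independent variables $(X_1,\dot X_1)$.

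\emph{Case $c_1=c_2$.} Then $w_1=w_2=w$. For fixed $w$, $X_1$ still varies freely, so the coefficient of $X_1$ gives $g_1'=g_2'$. The remaining terms then give $d_1g_1(w)=d_2g_2(w)$. Together these force $g_1$ and $g_2$ to differ by a constant with $d_1g_1=d_2g_2$, from which I would conclude $d_1=d_2$ and $g_1=g_2$, hence $f_1=f_2$. The degenerate subcase $d=0$ needs a short separate check.

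\emph{Case $c_1\neq c_2$.} This is the main obstacle. The map $(X_1,\dot X_1)\mapsto(w_1,w_2)$ is then linear and invertible, so $w_1$ and $w_2$ can be used as independent coordinates, with $X_1=(w_1-w_2)/(c_1-c_2)$. Rewriting the equality in these coordinates, every term is a function of $w_1$ alone, a function of $w_2$ alone, or a bilinear piece such as $(w_1-w_2)/g_i'(w_i)$. Applying $\partial_{w_1}\partial_{w_2}$ kills the separable parts and leaves
\begin{equation*}
\bigl(1/g_1'\bigr)'(w_1)=\bigl(1/g_2'\bigr)'(w_2),
\end{equation*}
up to sign. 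Both sides must therefore equal a common constant, and I expect the remaining first-order terms to force this constant to vanish. That makes $g_1'$ and $g_2'$ constant, i.e.\ $f^{-1}$ affine. Hence for non-affine $f^{-1}$ only $c_1=c_2$ is possible, and global identifiability follows from the first case.

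In the affine case $g(w)=\alpha w+\beta$, I would substitute directly. Matching coefficients of $X_1$, $\dot X_1$ and the constant term gives a polynomial system in $(c,d,\alpha,\beta)$. I would show that this system fixes $\alpha$, but is quadratic in $c$ and $d$ (essentially $c+d$ and $cd$ are constrained, up to the swap $c\leftrightarrow d$). It therefore has finitely many solutions but generically more than one, which gives local but not global identifiability of $c$, $d$ and $f$.
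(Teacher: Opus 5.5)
Part (ii) follows the paper's argument. Your remark that the second triple uses $X_1(0)=\dot X_2(0)+d_2X_2(0)$ states "identical outputs" more precisely than the paper does.

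\textbf{Part (i), non-affine case.} Your route differs from the paper's. The paper keeps $X_1=g'(u)\dot u+dg(u)$, equates the coefficients of $\ddot X_1$ to get $g_1'(u_1)=g_2'(u_2)$, and then applies Lemma~\ref{lemma:g_prime_equality} along the line $X_1=-s/c_1$, which is where $c\neq0$ is needed. You instead solve for $\ddot X_1$ and compare right-hand sides as functions of $(X_1,\dot X_1)$. Yours is the sound normalisation. On trajectories $\ddot X_1$ is determined by $(X_1,\dot X_1)$, so two relations linear in $\ddot X_1$ need only be proportional, not equal term by term.

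For non-affine $f^{-1}$ your argument works, provided $d\neq0$, which genericity supplies. For $d=0$ your separate check would find the translation family $f_2(y)=f_1(y+k)$. The step you leave as an expectation closes quickly:
\begin{itemize}
\item With $h_i=1/g_i'$, the mixed derivative gives $h_1(w)=Kw+a_1$ and $h_2(w)=-Kw+a_2$.
\item The bilinear term then becomes $[K(w_1^2-w_2^2)+(a_1-a_2)(w_1-w_2)]/(c_1-c_2)$, so the whole relation separates in $w_1$ and $w_2$.
\item Hence $d_1g_1/g_1'$ is a polynomial of degree at most two in $w_1$.
\item If $K\neq0$, then $g_1=K^{-1}\ln|Kw+a_1|+b_1$, which makes this impossible when $d_1\neq0$. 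If $d_1=0$, the $w_1^2$ coefficient $K/(c_1-c_2)$ must vanish.
\end{itemize}
Either way $K=0$, so $c_1=c_2$ for non-affine $f^{-1}$ and your first case finishes the proof.

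\textbf{The genuine gap: the affine case.} The step ``this system fixes $\alpha$'' fails, and your own normalisation shows the stated conclusion is false. Substituting $g(w)=\alpha w+\beta$ gives
\[ \ddot X_1=-(c+d)\dot X_1+(\alpha^{-1}-cd)X_1-d\beta/\alpha . \]
Matching coefficients yields only three conditions on four unknowns:
$c_1+d_1=c_2+d_2$, $\alpha_1^{-1}-c_1d_1=\alpha_2^{-1}-c_2d_2$, and $d_1\beta_1/\alpha_1=d_2\beta_2/\alpha_2$.
So $\alpha$ is not fixed and $cd$ is not constrained on its own; there is no two-fold swap. Instead $c_2$ is free, with $d_2=c_1+d_1-c_2$, $\alpha_2^{-1}=\alpha_1^{-1}-c_1d_1+c_2d_2$, and $\beta_2=\alpha_2d_1\beta_1/(\alpha_1d_2)$.

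Concretely, $(c,d,f(x))=(1,2,x+1)$ and $(\tfrac12,\tfrac52,\tfrac14x+\tfrac45)$ both give $\ddot X_1+3\dot X_1+X_1=2$. In each model $\dot X_1(0)$ sweeps $\mathbb{R}$ as $X_2(0)$ varies, so the observed $X_1$ trajectories coincide. Yet the slope of $f^{-1}$ is $1$ in one model and $4$ in the other.

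So for affine $f^{-1}$, the parameters $c$, $d$ and the function $f$ are structurally non-identifiable; only $c+d$, $cd-\alpha^{-1}$ and $d\beta/\alpha$ are identifiable. This contradicts both the local-identifiability claim and the global identifiability of the slope. The paper reaches its conclusion only through the coefficient match $g_1'(u_1)=g_2'(u_2)$ in the unnormalised relation. In the affine case the two relations are actually proportional with factor $\alpha_2/\alpha_1$, which need not equal $1$. Your final step should record this one-parameter family rather than try to recover a finite ambiguity.
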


In part~(i), when $f^{-1}$ is affine, the values of $c$ and $d$ can be recovered but cannot be assigned to the two decay processes: the unordered pair $\{c,d\}$ is identifiable, while the permutation is not. The proof of part~(i) is given in Appendix~\ref{appendix:proof_case3}. Global and local identifiability are discussed further in Section~\ref{sec:local_global}.

\begin{proof}[Proof of part (ii)]
To eliminate $X_1$, we first isolate it in Equation~\eqref{eq:diffalg_model2_dYdt} and differentiate:
\begin{equation*}
    X_1= \dot{X}_2 + dX_2 \quad \Longrightarrow \quad \dot{X}_1 = \ddot{X}_2 + d\dot{X}_2.
\end{equation*}
Substituting into Equation~\eqref{eq:diffalg_model2_dXdt} gives the following input-output equation
\begin{equation}
    \label{eq:diffalg_model2_Yobs_Yonly}
    \ddot{X}_2 = -(c+d)\dot{X}_2 + f(X_2) - cdX_2.
\end{equation}
Now suppose two pairs $(f_1, d_1)$ and $(f_2, d_2)$ yield identical $X_2$ dynamics. Inserting both into Equation~\eqref{eq:diffalg_model2_Yobs_Yonly}, isolating $\ddot{X_2}$, and then equating gives
\begin{equation}
    \label{eq:diffalg_model2_Yobs_equiveq}
    -(c_1+d_1)\dot{X}_2 + f_1(X_2) - c_1 d_1 X_2 = \ddot{X}_2 = -(c_2+d_2)\dot{X}_2 + f_2(X_2) - c_2 d_2 X_2.
\end{equation}
Here, each of the derivatives of $X_2$ can be varied independently, allowing us to separate Equation~\eqref{eq:diffalg_model2_Yobs_equiveq} into two independent equalities:
\begin{align}
    c_1 + d_1 &= c_2 + d_2, \label{eq:diffalg_case4_sum} \\
    f_1(X_2) - c_1 d_1 X_2 &= f_2(X_2) - c_2 d_2 X_2. \label{eq:diffalg_case4_fY}
\end{align}
For any $c_2 \in \mathbb{R}$, setting $d_2 = c_1 + d_1 - c_2$ satisfies Equation~\eqref{eq:diffalg_case4_sum}. Equation~\eqref{eq:diffalg_case4_fY} then requires
\begin{equation*}
    f_2(X_2) = f_1(X_2) + (c_2 d_2 - c_1 d_1)X_2 = f_1(X_2) + \bigl(c_2(c_1 + d_1 - c_2) - c_1 d_1\bigr)X_2,
\end{equation*}
which is the expression in Equation~\eqref{eq:diffalg_case4_nonident_expr}. Since $c_2$ may be chosen freely, infinitely many valid triples $(f_2, c_2, d_2)$ exist, hence $f$ (as well as $c$ and $d$) is structurally non-identifiable. The identifiable quantities can be determined directly from Equation~\eqref{eq:diffalg_model2_Yobs_Yonly}. Here, any triplets $(f_1,c_1,d_1)$ and $(f_2,c_2,d_2)$ that preserve the quantities $c+d$ and $f(x) - cdx$ yield identical input-output equations, preventing us from identifying more detailed dynamics.
\end{proof}

We note that the non-identifiability relationship described in Equation~\eqref{eq:diffalg_case4_nonident_expr} only exhibits entanglement between the unknown function and a linear term, i.e. all higher order dynamics can still be identified. This is the same as in Figure~\ref{fig:single_var_nonident}, where the entanglement was also limited to the linear dynamics.

Case (ii) of Proposition~\ref{prop:diffalg_two_parameter_identifiability} is the only structurally non-identifiable case among the four mutual-activation-loop examples considered in this section, which illustrates that functional identifiability depends on both the model structure and the choice of observable. The non-identifiability here can, nevertheless, be resolved by incorporating additional information. For example, if only $X_2$ is observed but the initial conditions of both $X_1$ and $X_2$ are known for each observed trajectory, then all unknown functions and parameters become identifiable for generic initial conditions (proof in Appendix~\ref{appendix:proof_case4_known_initial_conditions}). 


\section{Different forms of structural functional non-identifiability} \label{sec:nonidentifiability_types}

In classical structural parameter identifiability, non-identifiability arises when multiple parameter combinations generate identical observable dynamics. Proposition~\ref{prop:diffalg_two_parameter_identifiability}(ii) showed that this idea extends naturally to entanglement between unknown functions and parameters, where changes in one can be compensated by changes in the other. Here, we demonstrate that functional identifiability introduces further non-identifiability mechanisms that are absent from the classical parametric setting. We also revisit the distinction between local and global identifiability for unknown functions. A proof for each proposition is given in Appendix~\ref{appendix:proofs_section4}.

\subsection{Function-to-function non-identifiability} \label{sec:nonidentifiability_types_function2function}

Consider the model
\begin{align}
    \dot{X}_1 &= f(X_2) + dX_1, \label{eq:f2f_nonident_model_dXdt} \\
    \dot{X}_2 &= X_1 - g(X_2), \label{eq:f2f_nonident_model_dYdt}
\end{align}
where $f:\mathbb{R}\to\mathbb{R}$ and $g:\mathbb{R}\to\mathbb{R}$ are unknown functions and $d\in\mathbb{R}$ is an unknown parameter.

\begin{proposition} \label{prop:f2f_nonident}
If only $X_2$ is observable, then the unknown functions $f$ and $g$ are both structurally non-identifiable. Specifically, given any admissible triplet $(f_1, g_1, d_1)$, any alternative choice of the form
\begin{align}
    \label{eq:f2f_nonident_expr}
    d_2&\in\mathbb{R},  \notag \\
    g_2(X_2) &= g_1(X_2) + (d_2 - d_1)X_2, \notag \\
    f_2(X_2) &= f_1(X_2) + d_1 g_1(X_2) - d_2 g_1(X_2) - d_2(d_2 - d_1)X_2,
\end{align}
yields dynamics identical to $(f_1, g_1, d_1)$ with respect to $X_2$.
\end{proposition}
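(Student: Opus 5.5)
The plan is to follow the differential-algebra route used for Proposition~\ref{prop:diffalg_two_parameter_identifiability}(ii). First I will eliminate the unobserved variable $X_1$ to obtain the input-output equation. Then I will check that the proposed family leaves every coefficient of that equation unchanged. Finally I will confirm, by an explicit correspondence of trajectories, that the two parameter choices really generate the same $X_2$ dynamics.

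\emph{Elimination.} From Equation~\eqref{eq:f2f_nonident_model_dYdt} we have $X_1=\dot X_2+g(X_2)$. Differentiating gives $\dot X_1=\ddot X_2+g'(X_2)\dot X_2$. Substituting both expressions into Equation~\eqref{eq:f2f_nonident_model_dXdt} gives
\begin{equation*}
\ddot X_2 = \bigl(d-g'(X_2)\bigr)\dot X_2 + f(X_2) + d\,g(X_2).
\end{equation*}
Since $\dot X_2$ can be varied independently of $X_2$, this equation determines exactly two things: the function $d-g'(x)$ and the function $f(x)+d\,g(x)$.

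\emph{Invariance of the coefficients.} For the family in Equation~\eqref{eq:f2f_nonident_expr}, differentiating $g_2$ gives $g_2'=g_1'+(d_2-d_1)$, so $d_2-g_2'=d_1-g_1'$. Next, expanding
\begin{equation*}
f_2+d_2g_2 = f_1+d_1g_1-d_2g_1-d_2(d_2-d_1)x+d_2g_1+d_2(d_2-d_1)x
\end{equation*}
shows that all the extra terms cancel, leaving $f_2+d_2g_2=f_1+d_1g_1$. Hence both triplets produce the same input-output equation for every $d_2\in\mathbb{R}$. Choosing $d_2\neq d_1$ changes both $g$ and $f$, so neither function is structurally non-identifiable's opposite: both are non-identifiable.

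\emph{Trajectory correspondence.} The main subtlety is that "same input-output equation" must be turned into "same output for every initial condition consistent with $X_2(0)$". The input-output step also tacitly assumes $g$ is differentiable. To handle both points I will argue directly on trajectories. Take any solution $(X_1,X_2)$ of the system with $(f_1,g_1,d_1)$, and define
\begin{equation*}
\tilde X_1 = X_1+(d_2-d_1)X_2, \qquad \tilde X_2 = X_2.
\end{equation*}
\begin{itemize}
\item For the second equation: $\tilde X_1-g_2(\tilde X_2)=X_1-g_1(X_2)=\dot X_2$, so it holds.
\item For the first equation: $\dot{\tilde X}_1=f_1(X_2)+d_1X_1+(d_2-d_1)\bigl(X_1-g_1(X_2)\bigr)$. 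I will check that this equals $f_2(X_2)+d_2\tilde X_1$. This is the routine algebraic step: the $X_1$ terms match as $d_2X_1$, and the remaining terms reproduce Equation~\eqref{eq:f2f_nonident_expr}.
\end{itemize}
The map $X_1\mapsto\tilde X_1$ is a bijection on initial values of $X_1$ for fixed $X_2(0)$. Therefore the set of $X_2$ trajectories is the same for both triplets, which gives identical observed outputs and completes the proof.
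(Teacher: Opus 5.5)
Your argument is correct. Its first two steps (eliminating $X_1$ to obtain $\ddot X_2=(d-g'(X_2))\dot X_2+f(X_2)+d\,g(X_2)$, then checking that $d-g'$ and $f+dg$ are unchanged by the family) are exactly the paper's proof. The paper stops there and infers identical $X_2$ dynamics from identical input-output equations.

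Your third step is a genuinely different, self-contained argument. The map $(X_1,X_2)\mapsto(X_1+(d_2-d_1)X_2,\,X_2)$ sends solutions of the first system to solutions of the second, and its inverse sends them back by the same computation, so you do not even need uniqueness of solutions. This buys two things. First, it makes explicit the shift of the unobserved initial condition, $X_1(0)\mapsto X_1(0)+(d_2-d_1)X_2(0)$, needed to reproduce a given $X_2$ trajectory. At the same $X_1(0)$ the two triplets give different $\dot X_2(0)$ unless $X_2(0)=0$, so what you actually establish is equality of the sets of trajectories consistent with $X_2(0)$. This matches how the paper itself handles the neural ODE, chemical reaction network and Lotka--Volterra examples. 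Second, it needs no differentiability of $g$.

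The input-output route, conversely, is what reveals the identifiable combinations ($d-g'$ and $f+dg$) and hence how to find the family in the first place, whereas the trajectory map has to be guessed.

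One small precision point, which the paper shares: $f_2-f_1=(d_1-d_2)\bigl(g_1(x)+d_2x\bigr)$. So if $g_1(x)=-cx$, the single choice $d_2=c$ leaves $f$ unchanged. Your claim that any $d_2\neq d_1$ changes both functions therefore holds for all but at most one such $d_2$, which is all that non-identifiability requires. Also, the garbled sentence about ``non-identifiable's opposite'' should simply say that both functions are non-identifiable.
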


Here we have a non-identifiability relationship involving multiple functions: a change in $g$ propagates to a compensating change in $f$ that preserves the dynamics. While not surprising, this shows how, while we previously only had non-identifiability relationships involving parameters, with the introduction of unknown functions, we can have relations with any combination of parameters and/or functions.


\subsection{Intrinsic functional non-identifiability} \label{sec:nonidentifiability_types_pure}

The previous examples all involved an unknown function co-occurring with other unknowns. We now show that a function can be non-identifiable even in the absence of any other unknown quantities. Consider the following model with a sole unknown, $f:\mathbb{R}\to\mathbb{R}$:
\begin{align}
    \dot{X}_1 &= f(X_2) - X_1,   \label{eq:fonly_nonident_model_dXdt} \\
    \dot{X}_2 &= (X_1 - 1)X_2.   \label{eq:fonly_nonident_model_dYdt}
\end{align}

\begin{proposition} \label{prop:pure_nonident}
If only $X_1$ is observed, then the unknown function $f$ is structurally non-identifiable due to a scaling non-identifiability. Specifically, given any admissible function $f_1$, any alternative function of the form
\begin{equation} \label{eq:pure_nonident_expr}
    f_2(X_2) = f_1(X_2/k), \qquad k\in\mathbb{R}\setminus\{0\},
\end{equation}
yields identical dynamics to those of $f_1$ with respect to $X_1$, provided $f$ is locally invertible.
\end{proposition}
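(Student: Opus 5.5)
The plan is to exhibit the scaling symmetry directly at the level of trajectories, and then confirm it with the input-output equation in the style of Section~\ref{sec:diffalg}.

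\textbf{Direct symmetry.} Suppose $(X_1(t),X_2(t))$ solves Equations~\eqref{eq:fonly_nonident_model_dXdt}--\eqref{eq:fonly_nonident_model_dYdt} with $f=f_1$. Set $\tilde X_1=X_1$ and $\tilde X_2=kX_2$ for $k\neq 0$.
\begin{itemize}
\item Because Equation~\eqref{eq:fonly_nonident_model_dYdt} is linear in $X_2$, we get $\dot{\tilde X}_2 = k(X_1-1)X_2=(\tilde X_1-1)\tilde X_2$.
\item We also get $\dot{\tilde X}_1 = f_1(X_2)-X_1 = f_1(\tilde X_2/k)-\tilde X_1 = f_2(\tilde X_2)-\tilde X_1$, with $f_2(\cdot)=f_1(\cdot/k)$.
\end{itemize}
Hence $(\tilde X_1,\tilde X_2)$ solves the model with $f_2$, and it has the same observed output $X_1(t)$.

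Since only $X_1(0)$ is known, the unobserved initial value $X_2(0)$ ranges over everything consistent with it. The map $X_2(0)\mapsto kX_2(0)$ is a bijection of that set. So the full family of observed trajectories generated by $f_2$ equals that generated by $f_1$. This gives "same observed outputs" in the sense of Definition~\ref{def:functional_identifiability}. Taking $k\neq1$ and $f_1$ non-constant (generic) gives $f_2\neq f_1$, so $f$ is non-identifiable.

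\textbf{Input-output equation.} To connect with the differential algebra viewpoint and explain the local invertibility hypothesis, I would also derive the input-output equation.
\begin{itemize}
\item From Equation~\eqref{eq:fonly_nonident_model_dXdt}, $f(X_2)=\dot X_1+X_1$.
\item Local invertibility of $f$ gives $X_2=f^{-1}(\dot X_1+X_1)$.
\item Differentiating, $\dot X_2=(f^{-1})'(\dot X_1+X_1)(\ddot X_1+\dot X_1)$.
\item Substituting into Equation~\eqref{eq:fonly_nonident_model_dYdt} gives
\begin{equation*}
(f^{-1})'(\dot X_1+X_1)\,(\ddot X_1+\dot X_1) = (X_1-1)\,f^{-1}(\dot X_1+X_1).
\end{equation*}
\end{itemize}
The unknown enters only through $h=f^{-1}$, and the equation is homogeneous of degree one in $h$. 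Replacing $h$ by $kh$ leaves it invariant. Now $f_2(x)=f_1(x/k)$ has inverse $f_2^{-1}=kf_1^{-1}$, so $f_1$ and $f_2$ yield the same input-output equation and are indistinguishable.

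The main point needing care is the passage from equal input-output equations to equal observed outputs. The trajectory construction above settles this directly, which is why I would lead with it. Optionally, I would also argue that only this scaling is possible. Separating variables in the displayed equation gives $h'(z)/h(z)=\phi(z)$, where $\phi(z)$ is determined by the data. This forces $\log|h|$ to be determined up to an additive constant, i.e.\ $h$ is determined up to scaling by $k$. That is not needed for the stated proposition.
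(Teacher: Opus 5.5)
Your proposal is correct. Its second half is exactly the paper's proof: you eliminate $X_2$ via $f^{-1}$, note that the input--output relation is homogeneous of degree one in $h=f^{-1}$, and use $f_2^{-1}=kf_1^{-1}$.

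Your leading argument, however, takes a genuinely different route. Instead of comparing input--output equations, you exhibit the state-space conjugacy $(X_1,X_2)\mapsto(X_1,kX_2)$. It works because Equation~\eqref{eq:fonly_nonident_model_dYdt} is homogeneous of degree one in $X_2$, and it maps solutions of the $f_1$-system bijectively onto solutions of the $f_2$-system with the same $X_1(t)$. The paper uses this same device for Propositions~\ref{prop:nonident_neural_odes} and~\ref{prop:chemical reaction network_Xobs}, but not here. Your route buys two things:
\begin{itemize}
\item It verifies Definition~\ref{def:functional_identifiability} directly: the sets of observed outputs coincide over all initial conditions consistent with $X_1(0)$. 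This sidesteps the step from ``same input--output equation'' to ``same observed outputs'', which the paper leaves implicit.
\item It shows the local-invertibility hypothesis is not needed for non-identifiability at all. It is needed only to write down the input--output equation.
\end{itemize}
The paper's route gives insight into what \emph{is} identifiable. The relation $X_1=(\log|f^{-1}|)'(u)\,\dot u+1$ shows that the data see $f^{-1}$ only through its logarithmic derivative.

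Two small precision points; neither affects the proposition as stated.
\begin{itemize}
\item Your claim that ``$k\neq1$ and $f_1$ non-constant gives $f_2\neq f_1$'' fails for some $k$. For example, $f_1(x)=x^2$ is locally invertible in the paper's sense, yet $k=-1$ gives $f_2=f_1$. The correct statement is that \emph{some} $k$ works: invariance under every $k\neq0$ would force $f_1$ to be constant on $\mathbb{R}\setminus\{0\}$.
\item Your optional uniqueness remark is too strong. $\log|h|$ is fixed only up to a constant on each interval where $h\neq0$. Moreover, $X_2(t)=X_2(0)\exp\bigl(\int_0^t(X_1-1)\,ds\bigr)$ preserves sign, so the half-lines $X_2>0$ and $X_2<0$ can be rescaled independently. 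For instance, $f_2(y)=f_1(y/k_\pm)$ for $\pm y>0$ with $k_\pm>0$ also works. Hence the equivalence class among continuous functions is larger than the one-parameter scaling family.
\end{itemize}
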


This demonstrates another form of structural functional identifiability, further illustrating how functional identifiability introduces new nuance to the concept of identifiability.


\subsection{Global and local structural identifiability} \label{sec:local_global}
In Definition \ref{def:functional_identifiability}, we noted that identifiability can be classified as either \emph{global} or \emph{local}. This distinction is well-established for classical structural parameter identifiability, an example of which can be found in the following self-activating gene regulatory network
\begin{equation*}
    \dot{X} = \frac{X^2}{X^2 + K^2} - dX.
\end{equation*}
Here, $d$ is globally identifiable, whereas $K$ is only locally identifiable. Specifically, the quantity $K^2$ is uniquely determined by the dynamics, but one cannot distinguish between $K$ and $-K$, both of which produce identical dynamics. Additional constraints, such as restricting to $K>0$, can restore global identifiability.

The same global/local distinction applies to functions. Consider the model
\begin{equation}
    \label{eq:local_global_model}
    \dot{X} = f(X) + \frac{1}{f(X)},
\end{equation}
where $f:\mathbb{R}\to\mathbb{R}$ is an unknown function.

\begin{proposition} \label{prop:local_global}
The unknown function $f$ is locally but not globally structurally identifiable. Specifically, given any admissible function $f_1$, the alternative function
\begin{equation} \label{eq:local_global_nonident}
    f_2(X) = \frac{1}{f_1(X)},
\end{equation}
yields identical dynamics, so $f$ is determined only up to the two-element equivalence class $\{f,\; 1/f
\}$.
\end{proposition}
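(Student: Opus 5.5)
The plan has three parts: show that $f_2 = 1/f_1$ gives identical dynamics, show that no other function does, and then turn this into local identifiability via Definition~\ref{def:functional_identifiability}.

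\emph{Identical dynamics.} Here $X$ itself is observed, so the model is already in input-output form. Two functions give identical dynamics for every initial condition exactly when the right-hand sides agree pointwise on the state space, $h(f_1(X)) = h(f_2(X))$, where $h(y) = y + 1/y$. Since $h(1/y) = h(y)$, the choice $f_2 = 1/f_1$ gives the same vector field and hence the same trajectories. Admissibility requires $f_1$ to be nonvanishing on the relevant domain, and then $f_2$ is admissible too.

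\emph{No other solutions pointwise.} Conversely, suppose the outputs agree. As in the differential-algebra arguments of Section~\ref{sec:diffalg}, differentiating the observed trajectory gives $\dot X$ at every state value $X$, so $h(f_1(X)) = h(f_2(X))$ for all $X$. Write $a = f_1(X)$ and $b = f_2(X)$. Then $a + 1/a = b + 1/b$ rearranges to $(a-b)(1 - 1/(ab)) = 0$, so for each $X$ either $f_2(X) = f_1(X)$ or $f_2(X) = 1/f_1(X)$. Hence $f$ is not globally identifiable, since for generic $f_1$ we have $1/f_1 \neq f_1$.

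\emph{Local identifiability.} This is the step I expect to be the main obstacle. The pointwise dichotomy still allows $f_2$ to switch between the branches $f_1$ and $1/f_1$ from point to point. The plan is to use genericity: the fixed $f_1$ generically satisfies $f_1(X)^2 \neq 1$, except on a set where the two branches meet. Then I would impose continuity, either as part of admissibility or simply because the fitted functions are continuous. Under continuity, $f_2$ can switch branches only where $f_1 = \pm 1$. Separately, any pointwise mixture other than $f_1$ itself requires $f_2(X) = 1/f_1(X)$ at some point. Such a point lies at distance $|f_1 - 1/f_1| = |f_1^2 - 1|/|f_1|$ from $f_1$, and there the sup distance is at least this value, which is positive.

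To get a uniform $\varepsilon$, I would assume $f_1$ is generic in the sense that $|f_1^2-1|/|f_1|$ is bounded below on $\mathcal{U}$, for example on a compact domain where $f_1^2 \neq 1$. Choosing $\varepsilon$ below this bound forces $f_2 = f_1$ everywhere, which is local identifiability. The final step is to note that when $f_1$ avoids $\pm1$, continuity rules out branch switching, so the equivalence class is exactly $\{f, 1/f\}$. I would flag the caveat that if $f_1$ crosses $\pm 1$, continuous piecewise mixtures exist, and then the class is larger than two elements.
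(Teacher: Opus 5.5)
Your proposal is correct, and it is more thorough than the paper's own proof, which is much shorter. The paper substitutes $f_2 = 1/f_1$ to check that $f + 1/f$ is unchanged. It then notes that $1/f_1 \neq f_1$ wherever $f_1^2 \neq 1$, so global identifiability fails. Finally it asserts that the class has exactly two elements because $1/(1/f_1) = f_1$. That involution argument does not prove the converse, namely that every output-equivalent $f_2$ lies in $\{f_1, 1/f_1\}$. It also never checks the $\varepsilon$-condition of Definition~\ref{def:functional_identifiability}. Instead, the paper ends by conceding that branch switching where $f_1 = f_2$ can enlarge the class.

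You supply the missing pieces:
\begin{itemize}
\item the factorisation $(a-b)\bigl(1 - 1/(ab)\bigr) = 0$ gives the pointwise converse;
\item the gap $|f_1 - 1/f_1| = |f_1^2-1|/|f_1|$ gives sup-norm separation;
\item you state the extra hypotheses under which the proposition actually holds as written.
\end{itemize}
The paper's route buys brevity and an immediate proof of non-global identifiability. Yours buys a genuine proof of local identifiability and an honest statement of scope.

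Two small refinements. First, once $|f_1^2-1|/|f_1|$ is bounded below, local identifiability needs no continuity at all. Continuity is only needed, together with connectedness of $\mathcal{U}$, to pin the class down to exactly $\{f_1, 1/f_1\}$: the sets $\{f_2 = f_1\}$ and $\{f_2 = 1/f_1\}$ are then disjoint, closed and cover $\mathcal{U}$, so one of them is empty. Second, avoiding $\pm 1$ is not really a generic condition, since crossing $\pm 1$ transversally is an open condition as well. With continuity and only finitely many points where $f_1 = \pm 1$ on a compact domain, local identifiability still holds, because each switched piece sits at a fixed positive sup-distance from $f_1$. The class is then larger than two elements, in line with the caveat you flag.
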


Other local identifiability functional ambiguities can arise from sign symmetries, argument symmetries, and label-swapping between multiple functions; examples of which are given in Appendix~\ref{appendix:local_identifiability_examples}.


\section{Applications} \label{sec:applications}

We now apply the framework to two model classes with direct real-world relevance, a chemical reaction network model and the Lotka--Volterra model. In each case, we describe when the model exhibits structural identifiability and non-identifiability, and the form of any non-identifiabilities. Proofs are given in Appendix~\ref{appendix:proofs_section5}.


\subsection{Chemical reaction networks} \label{sec:applications_chemical reaction network}

Chemical reaction networks are a widely used modelling framework in biology and chemistry~\cite{hahl_comparison_2016,feinberg_foundations_2019}, with applications ranging from systems biology~\cite{goodwin_oscillatory_1965} and epidemiology~\cite{avram_advancing_2024} to pharmacology~\cite{zou_application_2020} and chemical kinetics~\cite{field_oscillations_1974,warnatz_combustion_1999}. Ordinary differential equation models are typically derived from reaction networks via the law of mass action. Importantly, the resulting systems often contain parameters or functional relationships that appear in multiple equations. Such repeated occurrences introduce additional constraints that can substantially affect identifiability. In this section, we use a chemical reaction network model both to illustrate structural functional identifiability analysis for this important class of systems and to demonstrate how identifiability is influenced when an unknown function appears across multiple equations.

We consider a simple two-species network with ODE model given by 
\begin{align}
    \dot{X}_1 &= f(X_2) - dX_1, \label{eq:chemical reaction network_dXdt} \\
    \dot{X}_2 &= X_1 - f(X_2), \label{eq:chemical reaction network_dYdt}
\end{align}
where $f:\mathbb{R}\to\mathbb{R}$ is an unknown function and $d\in\mathbb{R}$ is an unknown parameter. 

\begin{proposition} \label{prop:chemical reaction network}
When only $X_2$ is observable, $f$ and $d$ are structurally identifiable, given $d \neq 1$ and $f$ is non-affine. If $d = 1$, then only $f'$, but not $f$, is identifiable. When $f$ is affine, $f$ and $d$ are locally identifiable.
\end{proposition}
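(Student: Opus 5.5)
The plan is to follow the differential-algebra route of Section~\ref{sec:diffalg}: eliminate the unobserved $X_1$, then split the resulting input-output equation by treating $X_2$ and its derivatives as independent. Because $f$ appears in both equations, the split yields a \emph{functional} equation linking $f$ and $f'$, and the case analysis in the statement comes from solving it.

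\textbf{Input-output equation.} From Equation~\eqref{eq:chemical reaction network_dYdt} I isolate $X_1=\dot X_2+f(X_2)$. Differentiating with the chain rule gives $\dot X_1=\ddot X_2+f'(X_2)\dot X_2$, so I will assume $f$ is differentiable. Substituting both into Equation~\eqref{eq:chemical reaction network_dXdt} gives
\begin{equation*}
\ddot X_2 = -\bigl(f'(X_2)+d\bigr)\dot X_2 + (1-d)f(X_2).
\end{equation*}
For two triples $(f_1,d_1)$ and $(f_2,d_2)$ I equate the right-hand sides. Since $\dot X_2$ can be varied independently of $X_2$, this splits into two identities holding for all $x$:
\begin{equation*}
f_1'(x)+d_1=f_2'(x)+d_2, \qquad (1-d_1)f_1(x)=(1-d_2)f_2(x).
\end{equation*}
Integrating the first identity gives $f_2(x)=f_1(x)+(d_1-d_2)x+C$ for some constant $C$. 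Substituting into the second gives
\begin{equation*}
(d_2-d_1)f_1(x)=(1-d_2)\bigl((d_1-d_2)x+C\bigr).
\end{equation*}

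\textbf{Case analysis.} This last relation drives everything.
\begin{itemize}
\item If $d_2\neq d_1$, the relation forces $f_1$ to be affine. So for non-affine $f$ we get $d_2=d_1=d$ (which also settles the case $d=1$), and the relation reduces to $(1-d)C=0$.
\item If $d\neq 1$, then $C=0$, so $f_2=f_1$ and we have global identifiability.
\item If $d=1$, the zeroth-order term $(1-d)f$ vanishes from the input-output equation, so $C$ is free. Then $f$ is identified only up to an additive constant, i.e.\ only $f'$ is identifiable. I would exhibit the family $f_2=f_1+C$ explicitly to confirm non-identifiability.
\item If $f_1(x)=\alpha x+\beta$, I match the coefficients of $x$ and the constants. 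The $x$-coefficient gives $(d_2-d_1)(\alpha+1-d_2)=0$, so $d_2\in\{d_1,\,1+\alpha\}$. In each case $C$ is then uniquely determined by the constant term, provided the relevant factor $1-d_2$ is nonzero. This gives at most two admissible pairs, i.e.\ local but not global identifiability in the sense of Definition~\ref{def:functional_identifiability}.
\end{itemize}

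\textbf{Main obstacle.} I expect the delicate step to be the affine case, together with the degenerate subcases $d_2=1$ or $\alpha=0$. There I must check that the second solution $d_2=1+\alpha$ genuinely differs from $d_1$ for generic parameters, and that no continuum of solutions arises. Generic choice of $(f_1,d_1)$ should exclude these coincidences.

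A lesser point is justifying that $X_2$ and $\dot X_2$ range over an open set, so the coefficient splitting is valid. This is the same ideal-data assumption already used in the proofs of Propositions~\ref{prop:diffalg_one_parameter_identifiability} and \ref{prop:diffalg_two_parameter_identifiability}.
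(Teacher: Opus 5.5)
Your proposal is correct and follows the paper's route: the same input-output equation $\ddot X_2=(1-d)f(X_2)-(d+f'(X_2))\dot X_2$, the same split into $(1-d_1)f_1=(1-d_2)f_2$ and $d_1+f_1'=d_2+f_2'$, and the same affine/non-affine case analysis. The differences are minor. You integrate the derivative identity and substitute, whereas the paper divides by $1-d_1$; your version therefore also covers $d_1=1$ without extra care. In the affine case you solve the coefficient equations directly to obtain the second root $d_2=1+\alpha$, where the paper uses a quadratic in $d$.
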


When $f$ is affine, identifiability is only local: writing $f(x) = ax + b$, the combinations $a + d$ and $(1-d)a$ are globally identifiable, but $a$ and $d$ individually admit two solutions (the unordered pair $\{a, d\}$ is recoverable, but their permutation is not). Full identifiability, therefore, requires a non-affine $f$.

\begin{proposition} \label{prop:chemical reaction network_Xobs}
When only $X_1$ is observable, $f$ is structurally non-identifiable but $d$ is structurally identifiable, provided $f$ is locally invertible. Specifically, given any admissible triplet $(f_1, d, X_{2,1}(0))$, any alternative choice of the form
\begin{align}
    \label{eq:chemical reaction network_Xobs_nonident_expr}
    k&\in\mathbb{R}, \notag \\
    f_2(X_2) &= f_1(X_2 + k), \notag \\
    X_{2,2}(0) &= X_{2,1}(0) - k,
\end{align}
yields dynamics identical to $(f_1, d, X_{2,1}(0))$ with respect to $X_1$. Here, $X_2(0)$ is the initial value of the unobserved variable $X_2$.
\end{proposition}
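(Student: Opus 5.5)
We follow the differential algebra approach of Section~\ref{sec:diffalg}, eliminating the unobserved variable $X_2$ to obtain an input-output equation in $X_1$ alone. From Equation~\eqref{eq:chemical reaction network_dXdt} we have $f(X_2) = \dot X_1 + dX_1$. Since $f$ is assumed locally invertible, this gives
\begin{equation*}
    X_2 = f^{-1}(\dot X_1 + dX_1),
\end{equation*}
understood on a local branch. Adding the two equations gives $\dot X_1 + \dot X_2 = X_1 - dX_1$. We then differentiate the expression for $X_2$, substitute, and set $h := f^{-1}$ and $Z := \dot X_1 + dX_1$. This yields the input-output equation
\begin{equation*}
    h'(Z)\,(\ddot X_1 + d\dot X_1) = (1-d)X_1 - \dot X_1 .
\end{equation*}
Equivalently, $\ddot X_1 = -d\dot X_1 + \bigl((1-d)X_1 - \dot X_1\bigr)/h'(\dot X_1 + dX_1)$.

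The key observation is that only $h'$ enters, not $h$ itself. This immediately suggests the family in Equation~\eqref{eq:chemical reaction network_Xobs_nonident_expr}. If $f_2(x) = f_1(x+k)$, then $h_2 = h_1 - k$, so $h_2' = h_1'$ and the input-output equations coincide. To make this rigorous at the trajectory level, we verify directly that $(X_1, X_2 - k)$ solves the system with $f_2$. The $X_1$ equation is unchanged because $f_2(X_2 - k) = f_1(X_2)$. The $X_2$ equation is unchanged because $\dot X_2 = X_1 - f_1(X_2) = X_1 - f_2(X_2 - k)$. So shifting $X_2(0)$ by $-k$ produces the same $X_1$ trajectory. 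This proves non-identifiability of $f$, and also shows that the shift is tied to the unknown initial condition $X_2(0)$.

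For identifiability of $d$ (and to show that shifts are the only freedom), suppose $(h_1', d_1)$ and $(h_2', d_2)$ give the same input-output equation. Following the earlier proofs, we treat $X_1$, $\dot X_1$ and $\ddot X_1$ as independently variable under ideal data, and equate the two expressions for $\ddot X_1$. We then change variables to $(X_1, Z_1)$ with $Z_1 = \dot X_1 + d_1 X_1$. For each fixed $Z_1$, the equation is affine in $X_1$ with coefficient $(1-d_1)/h_1'(Z_1)$ on the first side, and the other side has a comparable affine structure involving $d_2$ but with argument $Z_2 = Z_1 + (d_2 - d_1)X_1$.

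The main obstacle is this separation step: we must show that equality for all $(X_1, \dot X_1)$ forces $d_1 = d_2$. We would first compare the $\dot X_1$-independent structure. Setting $\dot X_1$ so that the argument of $h'$ is held fixed while $X_1$ varies, the $-d\dot X_1$ terms and the linear-in-$X_1$ numerators must match. Differentiating the identity in $X_1$ at fixed $Z_1$ then yields $(d_2 - d_1)\,h_2''(\cdot)$ terms that must cancel for arbitrary $X_1$. This forces either $d_1 = d_2$ or $h_2'$ constant, i.e.\ affine $f$. The affine case needs a separate direct computation of the resulting linear second-order equation, using $d \ne 1$ genericity. Once $d_1 = d_2$, the identity gives $h_1'(Z) = h_2'(Z)$ on the observed range. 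Hence $h_2 = h_1 - k$ for some constant $k$, i.e.\ $f_2(x) = f_1(x + k)$. The induced shift $X_{2,2}(0) = X_{2,1}(0) - k$ follows from $X_2 = h(Z)$, which confirms that the family in Equation~\eqref{eq:chemical reaction network_Xobs_nonident_expr} is exactly the equivalence class.
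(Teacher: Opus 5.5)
Your non-identifiability argument matches the paper's: a direct check that $(X_1,X_2-k)$ solves the system with $f_2(x)=f_1(x+k)$.

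For $d$, your route differs from the paper's and is more careful. The paper equates the two relations $\dot X_1=(1-d_i)X_1-g_i'(u_i)(\ddot X_1+d_i\dot X_1)$ and reads off ``the $X_1$ terms'' $(1-d_1)X_1=(1-d_2)X_1$. That step ignores two things: $g_i'(u_i)$ depends on $X_1$ through $u_i$, and $\ddot X_1$ is tied to $(X_1,\dot X_1)$. Solving for $\ddot X_1$ and passing to $(X_1,Z_1)$, as you do, is what makes the separation legitimate.

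With $\phi_i:=1/h_i'=f_i'\circ f_i^{-1}$, $\delta:=d_2-d_1$, $x=X_1$ and $Z=Z_1$, the identity is
\[
\delta\,(Z-d_1x)+\phi_1(Z)\,(x-Z)=\phi_2(Z+\delta x)\,\bigl((1-\delta)x-Z\bigr).
\]
Your stated slope is slightly off: the first side is $d_1^2X_1-d_1Z_1+(X_1-Z_1)/h_1'(Z_1)$, so its slope is $d_1^2+1/h_1'(Z_1)$, not $(1-d_1)/h_1'(Z_1)$.

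Differentiating twice in $x$ at fixed $Z$ gives $\delta^2\phi_2''(w)(x-w)+2\delta(1-\delta)\phi_2'(w)=0$ with $w=Z+\delta x$. So for $\delta\neq0$, $\phi_2$ is affine, and constant unless $\delta=1$. The subcase $\delta=1$ needs a direct substitution, which again forces $\phi_2$ constant. The $x$-coefficient then forces $\phi_1$ constant. So for non-affine $f_1$ you correctly obtain $d_1=d_2$, $h_1'=h_2'$, and exactly the shift family. This is sharper than what the paper proves.

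The genuine gap is the affine branch. Your ``separate direct computation using $d\neq1$ genericity'' cannot yield $d_1=d_2$, because the claim is false there. For $f(x)=ax+b$ the input--output equation is
\[
\ddot X_1+(a+d)\dot X_1+a(d-1)X_1=0,
\]
which does not involve $b$ and is invariant under $(a,d)\mapsto(d-1,\,a+1)$.

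For example, $f_1(x)=x$, $d_1=3$ and $f_2(x)=2x$, $d_2=2$ both give $\ddot X_1+4\dot X_1+2X_1=0$. Every $(X_1(0),\dot X_1(0))$ is attained by some $X_2(0)$ in both systems, so the sets of observed $X_1$ trajectories coincide.

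The condition $d\neq1$ is precisely what makes the alternative slope $d-1$ nonzero, i.e.\ admissible under local invertibility. It creates the ambiguity rather than removing it. Hence for affine $f$, $d$ is only locally (two-fold) identifiable, as in Proposition~\ref{prop:chemical reaction network} and Proposition~\ref{prop:diffalg_two_parameter_identifiability}(i). Your claim that the shift family is exactly the equivalence class also fails there.

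You should restrict the identifiability of $d$ to non-affine $f$, which the generic choice in Definition~\ref{def:functional_identifiability} justifies, and state the affine two-fold ambiguity explicitly. The paper's proof does not catch this either: its coefficient matching would also wrongly return $d_1=d_2$ for affine $f$.
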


Again, this example demonstrates how identifiability depends on which variables are observable. It also demonstrates a case of intrinsic functional non-identifiability, i.e. that which does not involve entanglement with non-identifiable parameters. Finally, it illustrates how information regarding initial conditions is relevant: if we not only could observe $X_1$, but also the initial condition for  $X_2$, full identifiability would be achieved.


\subsection{The Lotka--Volterra model} \label{sec:applications_LV}

The Lotka--Volterra model is a classic example of predator--prey population dynamics~\cite{segel_j_2003}. The model has also seen extensive use to demonstrate hybrid modelling workflows, primarily for universal differential equations~\cite{dandekar_bayesian_2022,fronk_bayesian_2024,giampiccolo_robust_2024,grigorian_learning_2024,rackauckas_universal_2021}. Furthermore, it is an example where the unknown function depends on multiple variables, introducing additional complexity. We will consider two variants that differ in how the interaction term between the two species is represented. In each case, we will consider the case where either a single, or both, of the species are observable.

\paragraph{Model 1.} The two interaction terms are represented by distinct functions:
\begin{align}
    \dot{X}_1 &= \alpha X_1 + f(X_1,X_2), \label{eq:LV1_dXdt} \\
    \dot{X}_2 &= g(X_1,X_2) + \delta X_2. \label{eq:LV1_dYdt}
\end{align}
where $f:\mathbb{R}\to\mathbb{R}$ and $g:\mathbb{R}\to\mathbb{R}$ are unknown functions and $\alpha\in\mathbb{R}$ and $\delta\in\mathbb{R}$ are unknown parameters.

\begin{proposition} \label{prop:LV1_nonident}
For Model~1, $f$ and $g$ are structurally non-identifiable regardless of whether $X_1$, $X_2$, or both are observable.
\end{proposition}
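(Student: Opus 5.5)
The plan is to reduce everything to the fully observed case. Non-identifiability when both $X_1$ and $X_2$ are observed then carries over to any subset of observables, as noted at the start of Section~\ref{sec:general_nonident}. If two choices of unknowns produce identical full trajectories $(X_1(t),X_2(t))$ for every admissible initial condition, they certainly produce identical trajectories of $X_1$ alone, or of $X_2$ alone. So it suffices to exhibit, for a generic $(f_1,g_1,\alpha_1,\delta_1)$, a family of distinct alternatives that yields identical full dynamics.

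To build that family, I would note that Model~1 is exactly of the fully augmented form of Equation~\eqref{eq:complete_UDE}. In that form, $n=2$, $f_1\equiv f$ and $f_2\equiv g$ are unknown functions of the full state vector, and the known parts are $g_1(\vec{X};\alpha)=\alpha X_1$ and $g_2(\vec{X};\delta)=\delta X_2$. Each equation carries at least one unknown parameter, and $\{\alpha\}$ and $\{\delta\}$ are disjoint, so no consistency constraint arises. Proposition~\ref{prop:complete_UDE_nonident} then applies directly. For any $\alpha_2,\delta_2\in\mathbb{R}$, the choice
\begin{equation*}
    f_2(X_1,X_2)=f_1(X_1,X_2)+(\alpha_1-\alpha_2)X_1,\qquad g_2(X_1,X_2)=g_1(X_1,X_2)+(\delta_1-\delta_2)X_2
\end{equation*}
leaves both right-hand sides unchanged pointwise in $(X_1,X_2)$. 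The vector field is therefore identical, and by uniqueness of solutions to the initial value problem every trajectory coincides. Taking $\alpha_2\neq\alpha_1$ gives $f_2\neq f_1$, since their difference $(\alpha_1-\alpha_2)X_1$ is a nonzero function. Likewise $\delta_2\neq\delta_1$ gives $g_2\neq g_1$. Hence both $f$ and $g$ are non-identifiable.

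The remaining step is to confirm that this contradicts even local identifiability, and the only subtlety I expect is the domain. I read the signature $f:\mathbb{R}\to\mathbb{R}$ in the statement as the evident typo for $\mathbb{R}^2\to\mathbb{R}$. If the functions are considered on a bounded domain $\mathcal{U}$, then $\sup_{\mathcal{U}}|f_1-f_2|=|\alpha_1-\alpha_2|\sup_{\mathcal{U}}|X_1|$ can be made arbitrarily small by taking $\alpha_2$ close to $\alpha_1$. This gives non-identifiability in the strongest sense of Definition~\ref{def:functional_identifiability}: there is a continuum of alternatives arbitrarily close to the true one. On an unbounded domain the sup-norm difference is infinite, so the local definition is vacuous there. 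Global non-identifiability still holds regardless, which is what the proposition claims. I would also remark that Proposition~\ref{prop:nonident_neural_odes} says the right-hand sides $\alpha X_1+f$ and $g+\delta X_2$ themselves are identifiable when both species are observed, so this entanglement is the only obstruction in that case.
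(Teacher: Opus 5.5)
Your proof is correct and is essentially the paper's argument. The paper notes that Model~1 is a fully augmented system, so Proposition~\ref{prop:complete_UDE_nonident} supplies the compensating family $f_2=f_1+(\alpha_1-\alpha_2)X_1$, $g_2=g_1+(\delta_1-\delta_2)X_2$, and, as remarked at the start of Section~\ref{sec:general_nonident}, non-identifiability under full observation carries over to observing only $X_1$ or only $X_2$. Your caveat about the sup-norm local criterion on unbounded domains goes beyond the paper, which simply treats a continuum of alternatives as non-identifiability.
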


As Model~1 is a fully augmented ODE of the form described in Section~\ref{sec:nonident_complete_UDEs}, this result follows directly from Proposition~\ref{prop:complete_UDE_nonident}.

\paragraph{Model 2.} In contrast to Model~1, the two interaction terms are assumed to be identical up to a linear rescaling:
\begin{align}
    \dot{X}_1 &= \alpha X_1 + f(X_1,X_2),        \label{eq:LV2_dXdt} \\
    \dot{X}_2 &= \gamma f(X_1,X_2) + \delta X_2, \label{eq:LV2_dYdt}
\end{align}
where $f:\mathbb{R}\to\mathbb{R}$ is an unknown function and $\alpha\in\mathbb{R}$, $\gamma\in\mathbb{R}$, and $\delta\in\mathbb{R}$ are unknown parameters.

\begin{proposition} \label{prop:LV2_observability}\label{prop:LV2_Xobs}\label{prop:LV2_remaining_observability}\label{prop:LV2_Yobs}\label{prop:LV2_both}
For Model~2, the following hold.
\begin{enumerate}
    \item When only $X_1$ is observable, $f$ is structurally non-identifiable. Specifically, given any admissible pair $(f_1, \alpha_1)$ and associated unobserved trajectory $X_{2,1}(t)$, any choice of the form
\begin{align}
    \label{eq:LV2_nonident_expr}
    \alpha_2&\in\mathbb{R}, \notag \\
    f_2(X_1, X_{2,2}) &= f_1(X_1, X_{2,1}) + (\alpha_1 - \alpha_2)X_1, \notag \\
    X_{2,2}(t)         &= X_{2,1}(t) + Z(t),
\end{align}
    where $Z$ solves $\dot{Z} = \gamma(\alpha_1 - \alpha_2)X_1 + \delta Z$, yields dynamics identical to $(f_1, \alpha_1, X_{2,1}(t))$.
    \item When only $X_2$ is observable, $f$ is structurally non-identifiable. Specifically, given any admissible pair $(f_1, \alpha, \gamma, \delta_1)$ and associated unobserved trajectory $X_{1,1}(t)$, any choice of the form
\begin{align} \label{eq:LV2_Yobs_nonident_expr}
    \delta_2&\in\mathbb{R}, \notag \\
    f_2(X_{1,2}, X_2) &= f_1(X_{1,1}, X_2) + \frac{(\delta_1 - \delta_2)X_2}{\gamma}, \notag \\
    X_{1,2}(t)      &= X_{1,1}(t) + Z(t),
\end{align}
    where $Z$ solves $\dot{Z} = \alpha Z + (\delta_1 - \delta_2)X_2/\gamma$, yields dynamics identical to $(f_1, \alpha, \gamma, \delta_1)$ with respect to $X_2$.
    \item When both $X_1$ and $X_2$ are observable, $f$, $\alpha$, $\gamma$, and $\delta$ are all structurally identifiable.
\end{enumerate}
\end{proposition}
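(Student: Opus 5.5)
For parts (i) and (ii) I would argue by direct construction: take the proposed alternative, show that it reproduces the observed trajectory, and obtain the unobserved trajectory as the solution of a linear ODE. For part (iii) I would use the input-output (here, fully observed) equations and separate terms as in Section~\ref{sec:diffalg}.

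\textbf{Part (i).} Fix $(f_1,\alpha_1,\gamma,\delta)$ with trajectory $(X_1(t),X_{2,1}(t))$. Choose any $\alpha_2$ and let $Z$ solve $\dot Z=\gamma(\alpha_1-\alpha_2)X_1+\delta Z$ with $Z(0)=0$, so that the unobserved initial condition is unchanged, and set $X_{2,2}=X_{2,1}+Z$. Define $f_2$ along the trajectory by $f_2(X_1,X_{2,2})=f_1(X_1,X_{2,1})+(\alpha_1-\alpha_2)X_1$. Then $\alpha_2X_1+f_2=\alpha_1X_1+f_1=\dot X_1$, so $X_1$ is unchanged. Moreover $\gamma f_2+\delta X_{2,2}=\dot X_{2,1}+\gamma(\alpha_1-\alpha_2)X_1+\delta Z=\dot X_{2,1}+\dot Z$, so $X_{2,2}$ solves the second equation.

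The main obstacle is that $f_2$ must be a well-defined function on state space, not merely along one trajectory. I would handle this by noting that the map $(X_1,X_2)\mapsto(X_1,X_2+Z)$ is realised consistently, because $Z$ is a function of the flow. The cleanest route is to eliminate $X_2$ and show that the input-output equation in $X_1$ depends on $(f,\alpha)$ only through a combination that is invariant under this shift. Write $u=\dot X_1-\alpha X_1=f$. The second equation gives $\dot X_2=\gamma u+\delta X_2$. Hence $X_2$ is determined from $X_1$ only through the linear functional $\dot X_2-\delta X_2=\gamma(\dot X_1-\alpha X_1)$, and a change of $\alpha$ is absorbed into a change of the unobserved $X_2$. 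Since $f$ is unknown, it can be redefined on the shifted coordinates.

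\textbf{Part (ii).} This is symmetric to part (i). For fixed $\delta_2$, let $Z$ solve $\dot Z=\alpha Z+(\delta_1-\delta_2)X_2/\gamma$, set $X_{1,2}=X_{1,1}+Z$, and define $f_2$ as stated. Then:
\begin{itemize}
\item $\gamma f_2+\delta_2X_2=\gamma f_1+\delta_1X_2$, so $\dot X_2$ is preserved;
\item $\alpha X_{1,2}+f_2=\dot X_{1,1}+\alpha Z+(\delta_1-\delta_2)X_2/\gamma=\dot X_{1,2}$.
\end{itemize}
The same well-definedness caveat applies as in part (i).

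\textbf{Part (iii).} With both variables observed, suppose $(f_1,\alpha_1,\gamma_1,\delta_1)$ and $(f_2,\alpha_2,\gamma_2,\delta_2)$ give the same outputs for all initial conditions. Eliminating $f$ from the two equations yields the relation
\begin{equation*}
\dot X_2-\gamma\dot X_1+\gamma\alpha X_1-\delta X_2=0.
\end{equation*}
Because initial conditions $(X_1,X_2)$ are arbitrary and $f$ is generic, the quantities $\dot X_1,X_1,X_2$ vary independently. Comparing coefficients then gives $\gamma_1=\gamma_2$, $\gamma\alpha_1=\gamma\alpha_2$ and $\delta_1=\delta_2$, so $\alpha$ is recovered provided $\gamma\neq0$. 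Finally, $f=\dot X_1-\alpha X_1$ is determined pointwise on the state space.

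The delicate step is justifying the independence of $\dot X_1$ from $(X_1,X_2)$. I would argue it as follows: at a point $(X_1,X_2)$ we have $\dot X_1=\alpha X_1+f(X_1,X_2)$, which is fixed by the state. The coefficient comparison is therefore really between $\dot X_2$ and $\gamma\dot X_1$ as functions of the state. Writing $\gamma_1(\dot X_1-\alpha_1X_1)+\delta_1X_2=\gamma_2(\dot X_1-\alpha_2X_1)+\delta_2X_2$ as an identity in the state, with the generic, non-affine $f$ making $\dot X_1$ functionally independent of the monomials $X_1,X_2$, gives the required equalities.
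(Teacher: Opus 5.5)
Your verifications in (i) and (ii) are the same direct substitution the paper uses. Your part (iii) is the paper's coefficient comparison: you normalise on $\dot X_2$ where the paper normalises on $\dot X_1$, and your justification of independence via a non-affine $f$ is more careful than the paper's bare assertion.

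\textbf{The gap.} The problem is the step you add in (i): imposing $Z(0)=0$ on every trajectory and asserting that $Z$ is then ``a function of the flow''. With that choice, $Z(t)=\int_0^t e^{\delta(t-s)}\gamma(\alpha_1-\alpha_2)X_1(s)\,ds$ depends on the initial condition and the elapsed time, not on the current state, and the construction fails:
\begin{enumerate}
\item If $Z(0)=0$ for all initial conditions, the two states coincide at $t=0$.
\item Matching $\dot X_1(0)$ over all initial states then forces $f_2(x_1,x_2)=f_1(x_1,x_2)+(\alpha_1-\alpha_2)x_1$ on the whole state space.
\item But then, at the same state, the second model's $\dot X_2$ exceeds the first's by $\gamma(\alpha_1-\alpha_2)X_1$.
\item Hence $\ddot X_1(0)$ differs by $\gamma(\alpha_1-\alpha_2)X_1\,\partial_{X_2}f_1$, and the outputs separate.
\end{enumerate}
Keeping the unobserved initial condition fixed is exactly what cannot be done here. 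Your ``cleanest route'' paragraph does not repair this. Redefining $f$ ``on the shifted coordinates'' presupposes that the shift is a map of the current state, which is the very point at issue.

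\textbf{The fix.} Choose a solution of the $Z$-equation that is an explicit function of the state. The general solution adds $Ce^{\delta t}$, and your $Z(0)=0$ corresponds to a $C$ that depends on the initial condition. Along solutions of the first model, $\frac{d}{dt}(X_{2,1}-\gamma X_1)=\delta(X_{2,1}-\gamma X_1)+\gamma(\delta-\alpha_1)X_1$. So, for generic $\alpha_1\neq\delta$,
\begin{equation*}
Z=\frac{\alpha_1-\alpha_2}{\delta-\alpha_1}\bigl(X_{2,1}-\gamma X_1\bigr)
\end{equation*}
satisfies $\dot Z=\gamma(\alpha_1-\alpha_2)X_1+\delta Z$. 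Then $(X_1,X_{2,2})=\Phi(X_1,X_{2,1})$ for the linear map $\Phi(x_1,x_2)=\bigl(x_1,\,x_2+\tfrac{\alpha_1-\alpha_2}{\delta-\alpha_1}(x_2-\gamma x_1)\bigr)$, which is invertible iff $\alpha_2\neq\delta$. So $f_2:=\bigl(f_1+(\alpha_1-\alpha_2)x_1\bigr)\circ\Phi^{-1}$ is a genuine function, your two verification lines go through verbatim, and the unobserved initial condition moves to $\Phi(X_1(0),X_{2,1}(0))$.

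Conceptually, in the coordinate $W=X_2-\gamma X_1$ the model reads $\dot X_1=G(X_1,W)$, $\dot W=\delta W+\gamma(\delta-\alpha)X_1$ with $G$ arbitrary. Changing $\alpha$ is then just the rescaling $W\mapsto\frac{\delta-\alpha_2}{\delta-\alpha_1}W$.

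Part (ii) is repaired the same way with $Z=\frac{\delta_1-\delta_2}{\alpha-\delta_1}\bigl(X_{1,1}-X_2/\gamma\bigr)$. This uses $\frac{d}{dt}(X_1-X_2/\gamma)=\alpha(X_1-X_2/\gamma)+(\alpha-\delta_1)X_2/\gamma$ and is valid for $\delta_2\neq\alpha$.

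The paper's proof leaves $Z(0)$ unspecified and never discusses well-definedness, so you were right to flag the issue. However, the resolution you proposed picks initial data that break the argument; the state-dependent choice above is what makes it work.
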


The two single-observable cases in Proposition~\ref{prop:LV2_observability} differ from the preceding examples because their non-identifiability depends on the unobserved dynamics themselves. The ambiguity cannot be described solely in terms of transformations of the unknown functions and parameters; it also involves the unobserved state variable. More generally, unobserved states can be regarded as unknown quantities subject to identifiability analysis, alongside parameters and functions, although we do not pursue that perspective here. Case~(iii) is the only structurally functionally identifiable Lotka--Volterra case considered here, although alternative formulations can also achieve identifiability, for example by assuming certain parameters are known. Together, the examples highlight an important distinction between identifiability of model components and identifiability of the overall dynamics. Model~1, for instance, falls into the class of hybrid models discussed in Section~\ref{sec:nonident_neural_odes}, where the only identifiable expressions are system right-hand side expressions. As a consequence, this hybrid model formulation contributes no mechanistic knowledge, and is in practice equivalent to a pure Neural ODE.


\section{Discussion} \label{sec:discussion}

In this work, we have extended the concept of structural identifiability from scalar parameters to unknown functions. Although functional and parametric identifiability are conceptually related, and can both be analysed using differential algebra, our results show that the functional setting introduces phenomena that have no direct analogue in the classical parameter identifiability framework. In addition to entanglement between unknown functions and parameters, we identified non-identifiability relationships involving function scaling, interactions between multiple unknown functions, and dependencies on unobserved system dynamics. These examples illustrate that moving from parameter inference to function inference is not simply a change in dimensionality, but introduces qualitatively new forms of ambiguity.

The transition from parameters to functions also affects the identifiability analysis itself. While the differential algebra approach remains applicable, the resulting derivations often require additional mathematical considerations that do not arise in the parametric setting. For example, the proof of Proposition~\ref{prop:diffalg_one_parameter_identifiability}(ii) relies on properties of the inverse of the unknown function, and several other examples require reasoning about functional transformations rather than parameter substitutions. Functional identifiability therefore extends not only the scope of structural identifiability analysis, but also the range of mathematical phenomena that such analyses must accommodate (like considering whether unknown functions are locally invertible or affine).

The framework demonstrated here is largely agnostic to the particular hybrid modelling methodology employed. It applies directly to approaches in which unknown functions are embedded within differential equations, including universal differential equations, physics-informed neural networks, and biology-informed neural networks. The framework can, in theory, also extend to equation discovery methods, such as SINDy or symbolic regression-based approaches. Here, candidate equations are viewed as functional relationships between state variables, and the validity of results requires that terms in non-identifiable functions' entanglement expressions also lie in the set of basis functions used for symbolic regression. More broadly, any modelling approach that seeks to infer unknown functional components within a fixed dynamical system falls within the scope of the theory developed here. By contrast, extending structural functional identifiability to settings in which the model structure itself changes, for example through the introduction of additional state variables and governing equations, remains an open challenge.

Although structural parameter identifiability is now a well-established field, its scope continues to expand. Recent work has extended identifiability analysis beyond ordinary differential equations to settings including partial differential equations and stochastic differential equations~\cite{renardy_structural_2022,browning_structural_2024,byrne_algebraic_2025,browning_exact_2025}. An important direction for future research is, therefore, to determine how functional identifiability can be formulated and analysed in these broader modelling frameworks. Further theoretical development is also needed to better accommodate more general classes of unknown functions, including functions of multiple variables, independent variables such as time, and unknown functions that depend on unknown parameters.

From a methodological perspective, applying differential algebra to unknown functions is often more involved than in the classical parametric setting. While the examples considered here remain tractable, it is not yet clear how well the approach scales to larger or more complex models. Furthermore, the widespread adoption of structural parameter identifiability has been facilitated by the development of automated software tools~\cite{structidjl}. Determining whether comparable automation is possible for structural functional identifiability, and developing the corresponding computational infrastructure, will be essential if functional identifiability analysis is to be applied routinely across scientific disciplines.

As scientific modelling becomes increasingly data-driven, a central question is what role mechanistic knowledge should play. Hybrid models offer a principled way to combine domain expertise with flexible function inference, but their flexibility also introduces the risk that multiple explanations may fit the same observations equally well. Structural functional identifiability provides a means of determining when mechanistic assumptions genuinely constrain the recovery of unknown dynamics and when they do not. In this sense, it offers a theoretical foundation for assessing whether hybrid models deliver new scientific understanding or simply alternative representations of the same observed behaviour.


\section*{Acknowledgements}

TEL and REB are supported by a grant from the Simons Foundation (MP-SIP-00001828). For the purpose of open access, the author has applied a CC BY public copyright licence to any author-accepted manuscript arising from this submission.





\bibliographystyle{RS}
\bibliography{references}

\appendix


\newpage

\section{Proofs} \label{appendix:proofs}


\subsection{Proofs for Section~\ref{sec:general_nonident}} \label{appendix:proofs_section2}

\subsubsection{Proof of Proposition~\ref{prop:1var_nonident}} \label{appendix:proof_1var}

Given an admissible choice $(f_1,\vec{p}_1)$, we wish to show that any proposed choice of the form
\begin{align*}    
    \vec{p}_2 &\in \mathbb{R}^{|\vec{p}_2|}, \\
    f_2(X) &= f_1(X) + g(X;\vec{p}_1) - g(X;\vec{p}_2),
\end{align*}
yields identical dynamics. Inserting these into Equation~\eqref{eq:nonident_1var_UDEs} yields
\begin{align*}
    \dot{X} &= f_2(X) + g(X;\vec{p}_2) \\
            &= f_1(X) + g(X;\vec{p}_1) - g(X;\vec{p}_2) + g(X;\vec{p}_2) \\
            &= f_1(X) + g(X;\vec{p}_1).
\end{align*}
This is identical to the dynamics generated by $(f_1,\vec{p}_1)$. Since $\vec{p}_2$ may be chosen freely, infinitely many valid $f_2$ exist, hence $f$ (as well as $\vec{p}$) is structurally non-identifiable.

\subsubsection{Proof of Proposition~\ref{prop:multivar_nonident}} \label{appendix:proof_multivar}

Given an admissible choice with parameter split $\vec{p}_{1,1}=\{\vec{a}_1,\vec{b}\}$, consider any proposed choice with $\vec{p}_{1,2}=\{\vec{a}_2,\vec{b}\}$ of the form
\begin{align*}
    \vec{a}_2    &\in \mathbb{R}^{|\vec{a}|}, \\
    f_2(\vec{u}) &= f_1(\vec{u}) + r(\vec{v};\vec{a}_1) - r(\vec{v};\vec{a}_2),
\end{align*}
yields identical dynamics. Inserting these into Equation~\eqref{eq:nonident_partial_system_UDEs} yields
\begin{align*}
    \dot{X}_1 &= f_2(\vec{u}) + g_1(\vec{X};\vec{p}_{1,2}) \\
              &= f_2(\vec{u}) + r(\vec{v};\vec{a}_2) + s(\vec{X};\vec{b}) \\
              &= f_1(\vec{u}) + r(\vec{v};\vec{a}_1) - r(\vec{v};\vec{a}_2) + r(\vec{v};\vec{a}_2) + s(\vec{X};\vec{b}) \\
              &= f_1(\vec{u}) + r(\vec{v};\vec{a}_1) + s(\vec{X};\vec{b}) \\
              &= f_1(\vec{u}) + g_1(\vec{X};\vec{p}_{1,1}).
\end{align*}
This is identical to the dynamics generated by $(f_1, \vec{p}_{1,1})$. The remaining equations are unchanged, as neither $f$ nor $\vec{a}$ appears in these. Since $\vec{a}_2$ may be chosen freely, infinitely many valid choices of $f_2$ exist, hence $f$ is structurally non-identifiable.

\subsubsection{Repeated-function counter-proof} \label{appendix:repeated_function_counterexample}

The proof of Proposition~\ref{prop:multivar_nonident} requires that $f$ appears only in the first equation. To see why this matters, consider a repeated-function extension
\begin{align}
    \dot{X}_1 &= f(\vec{u}) + g_1(\vec{X};\vec{p}_1), \label{eq:repeated_f_counter_X1} \\
    \dot{X}_2 &= f(\vec{u}) + g_2(\vec{X};\vec{p}_2), \label{eq:repeated_f_counter_X2} \\
    \dot{X}_i &= g_i(\vec{X};\vec{p}_i), \qquad i=3,\ldots,n, \notag
\end{align}
where $f$ appears in both the first and second equation. Here, identifiability may still hold for $f$.

\begin{proof}
The construction in Proposition~\ref{prop:multivar_nonident} changes $\vec{a}_1$ to an arbitrary $\vec{a}_2$ and compensates in the first equation by setting
\begin{equation*}
    f_2(\vec{u}) = f_1(\vec{u}) + r(\vec{v};\vec{a}_1) - r(\vec{v};\vec{a}_2).
\end{equation*}
This still preserves Equation~\eqref{eq:repeated_f_counter_X1}, since
\begin{align*}
    f_2(\vec{u}) + r(\vec{v};\vec{a}_2) + s(\vec{X};\vec{b})
    = f_1(\vec{u}) + r(\vec{v};\vec{a}_1) + s(\vec{X};\vec{b}) = f_1(\vec{u}) + g_1(\vec{X};\vec{p}_1).
\end{align*}
However, the same substitution changes Equation~\eqref{eq:repeated_f_counter_X2}:
\begin{align*}
    f_2(\vec{u}) + g_2(\vec{X};\vec{p}_2)
    = f_1(\vec{u}) + r(\vec{v};\vec{a}_1) - r(\vec{v};\vec{a}_2) + g_2(\vec{X};\vec{p}_2).
\end{align*}
This is identical to the original second equation only if $r(\vec{v};\vec{a}_1)=r(\vec{v};\vec{a}_2)$ on the admissible domain. Thus the arbitrary choice of $\vec{a}_2$ that generates the non-identifiable family in Proposition~\ref{prop:multivar_nonident} is no longer available. A trivial case where $f$ remains identifiable is where $g_2$ depends on no unknown parameters, i.e.~$|\vec{p}_2| = 0$. In this case, Equation~\eqref{eq:repeated_f_counter_X2} directly determines $f(\vec{u})$ from the observed dynamics (without even needing to consider Equation~\eqref{eq:repeated_f_counter_X1}). This does not prove that every repeated-function model is identifiable, but it shows that Proposition~\ref{prop:multivar_nonident} cannot be extended unchanged once $f$ appears in another equation.
\end{proof}

\subsubsection{Proof of Proposition~\ref{prop:complete_UDE_nonident}} \label{appendix:proof_complete_UDE}

Given an admissible choice $(f_{i,1}, \vec{p}_{i,1})$, choose any consistent alternative parameter sets $\vec{p}_{i,2}$, respecting any overlaps between the $\vec{p}_i$, and define
\begin{align*}
    f_{i,2}(\vec{X}) &= f_{i,1}(\vec{X}) + g_i(\vec{X};\vec{p}_{i,1}) - g_i(\vec{X};\vec{p}_{i,2}).
\end{align*}
Inserting these into Equation~\eqref{eq:complete_UDE} gives
\begin{align*}
    \dot{X}_i &= f_{i,2}(\vec{X}) + g_i(\vec{X};\vec{p}_{i,2}) \\
              &= f_{i,1}(\vec{X}) + g_i(\vec{X};\vec{p}_{i,1}) - g_i(\vec{X};\vec{p}_{i,2}) + g_i(\vec{X};\vec{p}_{i,2}) \\
              &= f_{i,1}(\vec{X}) + g_i(\vec{X};\vec{p}_{i,1}).
\end{align*}
This is identical to the dynamics generated by $(f_{i,1}, \vec{p}_{i,1})$. Since the alternative parameter sets $\vec{p}_{i,2}$ may be varied consistently, infinitely many valid choices of $f_{i,2}$ exist, hence all $f_i$ (as well as all $\vec{p}_i$) are structurally non-identifiable.

\subsubsection{Proof of Proposition~\ref{prop:nonident_neural_odes}} \label{appendix:proof_neural_odes}

\textit{If all states are observed, the system is identifiable.} Since each $X_i$ is fully observed, both it and its differentials are fully known. From this, and the relation $\dot{X}_i = f_i(\vec{X})$, we have that each $f_i$ is fully identifiable.

\medskip\noindent
\textit{If at least one state is unobserved, the system is non-identifiable.} We will show that non-identifiability holds for the case where only a single variable, $X_1$, is unobservable. From this, it directly follows that non-identifiability still holds when observability for additional states is lost.

Take an admissible set of functions $f_{j,1}$ and an associated trajectory of the unobserved state $X_{1,1}(t)$. We wish to show that any proposed choice of the form
\begin{align*}
    k    &\in \mathbb{R}, \\
    f_{i,2}(X_1, X_2, ...) &= f_{i,1}(X_1 - k, X_2, \ldots),\\
    X_{1,2}(t) &= X_{1,1}(t) + k,
\end{align*}
yields identical dynamics. Inserting these into Equation~\eqref{eq:nonident_neural_odes} gives, for each $i$,
\begin{align*}
    \dot{X}_i &= f_{i,2}(X_{1,2}, X_2, \ldots) \\
              &= f_{i,1}(X_{1,2} - k, X_2, \ldots) \\
              &= f_{i,1}(X_{1,1} + k - k, X_2, \ldots) \\
              &= f_{i,1}(X_{1,1}, X_2, \ldots).
\end{align*}
This is identical to the dynamics generated by $(f_{i,1}, X_{1,1}(t))$. Since $k$ may be chosen freely, infinitely many valid choices of $f_{i,2}$ exist, hence all $f_i$ (as well as the dynamics of the unknown variable $X_1$) are structurally non-identifiable.


\subsection{Proofs for Section~\ref{sec:diffalg}} \label{appendix:proofs_section3}

For Proposition~\ref{prop:diffalg_one_parameter_identifiability}, part~(i) is proved in the main text and part~(ii) below. For Proposition~\ref{prop:diffalg_two_parameter_identifiability}, part~(i) is proved below and part~(ii) in the main text.

\subsubsection{Proof of Proposition~\ref{prop:diffalg_one_parameter_identifiability}(ii)} \label{appendix:proof_case2}

Isolate $X_2$ from Equation~\eqref{eq:diffalg_model1_dXdt}, writing
\begin{equation*}
    X_2 = f^{-1}(\dot{X}_1 + dX_1).
\end{equation*}
Let $u = \dot{X}_1 + dX_1$ and $g = f^{-1}$, so that $X_2 = g(u)$ and $\dot{X}_2 = g'(u)\dot{u}$. Substituting into Equation~\eqref{eq:diffalg_model1_dYdt} and rearranging gives
\begin{equation}
    \label{eq:diffalg_model1_Xobs_Xeq}
    X_1 = g'(u)\dot{u} + dg(u).
\end{equation}
Suppose $(f_1,d_1)$ and $(f_2,d_2)$ yield identical $X_1$ dynamics. Writing $g_1 = f_1^{-1}$, $g_2 = f_2^{-1}$, $u_1 = \dot{X}_1 + d_1 X_1$, and $u_2 = \dot{X}_1 + d_2 X_1$ and inserting both into Equation~\eqref{eq:diffalg_model1_Xobs_Xeq}, expanding $\dot{u}_1 = \ddot{X}_1 + d_1\dot{X}_1$, $\dot{u}_2 = \ddot{X}_1 + d_2\dot{X}_1$, and rearranging gives:
\begin{align}
    \label{eq:diffalg_model1_Xobs_equiveq}
    g_1'(u_1)\dot{u_1} + d_1g_1(u_1) &= X_1 = g_2'(u_2)\dot{u_2} + d_2g_2(u_2) \notag \Longleftrightarrow \\
    (\ddot{X}_1 + d_1\dot{X}_1)g_1'(u_1) + d_1g_1(u_1) &= (\ddot{X}_1 + d_2\dot{X}_1)g_2'(u_2) + d_2g_2(u_2) \notag \Longleftrightarrow \\
    \ddot{X}_1\,g_1'(u_1) + d_1\bigl(\dot{X}_1g_1'(u_1) + g_1(u_1)\bigr) &=  \ddot{X}_1\,g_2'(u_2) + d_2\bigl(\dot{X}_1g_2'(u_2) + g_2(u_2)\bigr).
\end{align}
Here, $\ddot{X}_1$ can be varied independently of $X_1$ and $\dot{X}_1$, so the coefficients of $\ddot{X}_1$ in Equation~\eqref{eq:diffalg_model1_Xobs_equiveq} must agree. Hence $g_1'(u_1) = g_2'(u_2)$. From this, and Lemma~\ref{lemma:g_prime_equality} (shown below), we have that either $d_1 = d_2$ or both $g_1'$ and $g_2'$ are constant.

\medskip\noindent
\textit{Case 1: $d_1 = d_2 \equiv d$.} Then $u_1 = u_2 \equiv u$. Cancelling the $\ddot{X}_1$ terms from Equation~\eqref{eq:diffalg_model1_Xobs_equiveq} gives 
\begin{equation*}
    d(\dot{X}_1g_1'(u) + g_1(u)) = d(\dot{X}_1g_2'(u) + g_2(u)).
\end{equation*}
We already have $g_1' = g_2'$, and we thus have $g_1(u) = g_2(u)$ and hence $f_1 = f_2$. The issue of invertibility of $f$ is discussed in Remark~\ref{remark:proof_invertibility}.

\medskip\noindent
\textit{Case 2: $g_1'$ and $g_2'$ constant.} By Lemma~\ref{lemma:constant_g_prime} below, $d_1 = d_2$ and $g_1 = g_2$, hence $f_1 = f_2$.

\medskip\noindent
In both cases, $d_1 = d_2$ and $f_1 = f_2$, so both $d$ and $f$ are structurally identifiable (subject to the local invertibility and $d \neq 0$ conditions considered below).

\begin{remark} \label{remark:proof_invertibility}

Multiple proofs of identifiability throughout this work (specifically those where the unknown function depends on an unobserved variable) assume that $f$ can be inverted. Here, the identifiability arguments hold as long as $f$ is \emph{locally invertible} (i.e. does not contain any flat regions).
\begin{proof}
Local invertibility implies that $f' = 0$ only at isolated points. Hence, the domain of $f$ can be written as a countable union of maximal open intervals $\bigcup_k I_k$ on each of which $f$ is strictly monotone. On each $I_k$, a smooth local inverse $g_k\colon f(I_k) \to I_k$ exists. The main proof can be applied to each $I_k$ independently because the input-output equation is a pointwise relation: it constrains the values of $f$ and $f'$ at a point $y$, without coupling to the values of $f$ at any other point. Since $f|_{I_k}$ is invertible, the main proof establishes identifiability of $f$ on each $I_k$ and of all parameters. The parameter conclusions are constants, so they agree across intervals. The function identity $f_1 = f_2$ therefore holds on $\bigcup_k I_k$, which is dense in $\mathbb{R}$. Since $f_1 - f_2$ is continuous and vanishes on a dense set, $f_1 = f_2$ everywhere.
\end{proof}
\end{remark}

\begin{lemma} \label{lemma:g_prime_equality}
Let $u_1 = \dot{X}_1 + d_1 X_1$, $u_2 = \dot{X}_1 + d_2 X_1$ with $d_1 \neq 0$. If $g_1'(u_1) = g_2'(u_2)$ holds for all admissible values of $X_1$ and $\dot{X}_1$, then either $d_1 = d_2$ (and hence $u_1 = u_2$), or both $g_1'$ and $g_2'$ are constant functions.
\end{lemma}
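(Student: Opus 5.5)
We treat $X_1$ and $\dot{X}_1$ as independent real variables ranging over an open set. This is the same idealised-data assumption used throughout Section~\ref{sec:diffalg}. The change of variables $(X_1,\dot{X}_1)\mapsto(u_1,X_1)$, with $u_1=\dot{X}_1+d_1X_1$, is then a linear bijection. In these coordinates $u_2=u_1+(d_2-d_1)X_1$, so the hypothesis becomes
\begin{equation*}
    g_1'(u_1) = g_2'\bigl(u_1 + (d_2-d_1)X_1\bigr)
\end{equation*}
for all admissible $(u_1,X_1)$. We assume $d_1\neq d_2$, since otherwise the first alternative of the lemma holds and there is nothing to prove. The task is then to show that both $g_1'$ and $g_2'$ are constant.

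First, fix $u_1$ and vary $X_1$. The left-hand side does not depend on $X_1$. As $X_1$ ranges over an interval, the argument $u_1+(d_2-d_1)X_1$ sweeps an interval, because $d_2-d_1\neq 0$. Hence $g_2'$ is constant on that interval; if $g_2$ is $C^1$, this is just $\partial_{X_1}$ of the right-hand side vanishing, that is $(d_2-d_1)g_2''(u_2)=0$. Second, fix $X_1$ and vary $u_1$. Since $g_2'$ is locally constant, $g_1'(u_1)$ equals that constant, so $g_1'$ is constant on the corresponding interval too.

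Third, we pass from local to global constancy. For each fixed $u_1$, the value $g_2'(u_2)$ equals the single number $g_1'(u_1)$ along the whole segment of $u_2$ values, so overlapping segments force one common constant. Connectedness of the admissible domain, or of each monotone branch $I_k$ in the sense of Remark~\ref{remark:proof_invertibility}, then gives a single constant on each branch for both $g_1'$ and $g_2'$.

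The main obstacle is making precise what ``all admissible values'' means. We need the set of attained $(X_1,\dot{X}_1)$ pairs to contain an open set, so that both directions of variation are genuinely available. We also need to keep track of the domains of $g_1,g_2$ as local inverses. We will state this as the standing assumption that observations fill an open region, which matches the independence heuristic used in the main text. The hypothesis $d_1\neq 0$ is not used in this argument. It matters downstream, in Lemma~\ref{lemma:constant_g_prime}, when the constant case is resolved.
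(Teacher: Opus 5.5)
Your proof is correct and follows the paper's argument. The paper restricts to the level set $u_1=0$, parametrised as $\dot X_1=s$, $X_1=-s/d_1$ (the only place it uses $d_1\neq0$); it then notes that $u_2=s(1-d_2/d_1)$ sweeps all of $\mathbb{R}$, so $g_2'\equiv g_1'(0)$, and gets constancy of $g_1'$ by letting $u_1$ vary. Parametrising the level sets of $u_1$ by $X_1$ instead, as you do, confirms that $d_1\neq0$ is superfluous here. Your localisation to an open connected set of attainable $(X_1,\dot X_1)$ makes explicit what the paper assumes tacitly (that the whole line is admissible), at the cost of concluding constancy only on the attained ranges of $u_1$ and $u_2$, which is what Lemma~\ref{lemma:constant_g_prime} actually uses.
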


\begin{proof}
If $d_1 = d_2$, then $u_1 = u_2$ and the conclusion holds trivially. Suppose therefore that $d_1 \neq d_2$; we show both $g_1'$ and $g_2'$ must be constant. Consider the line of potential values
\begin{align*}
    \dot{X}_1 &= s, \\
    X_1 &= -s/d_1,
\end{align*}
for $s\in\mathbb{R}$. Then
\begin{align*}
    u_1 &= s + d_1\!\left(-\frac{s}{d_1}\right) = 0, \\
    u_2 &= s + d_2\!\left(-\frac{s}{d_1}\right) = s\!\left(1 - \frac{d_2}{d_1}\right).
\end{align*}
Since $d_1 \neq d_2$, the factor $(1-d_2/d_1)$ is nonzero, so as $s$ is varied over $\mathbb{R}$, $u_2$ takes every value in $\mathbb{R}$. The hypothesis $g_1'(u_1) = g_2'(u_2)$ therefore gives $g_2'(w) = g_1'(0)$ for all $w \in \mathbb{R}$, so $g_2'$ is the constant $g_1'(0)$. Since $g_2'$ is constant,  $g_1'(u_1) = g_2'(u_2)$, and $u_1 = \dot{X}_1 + d_1 X_1$ can take any value in $\mathbb{R}$, it follows that $g_1'$ is constant as well, proving the lemma.
\end{proof}

\begin{lemma} \label{lemma:constant_g_prime}
Under the hypotheses of Lemma~\ref{lemma:g_prime_equality}, and assuming both pairs satisfy Equation~\eqref{eq:diffalg_model1_Xobs_equiveq}, if both $g_1'$ and $g_2'$ are constant, then $d_1 = d_2$ and $g_1 = g_2$.
\end{lemma}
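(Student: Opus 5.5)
Write $g_1' \equiv a_1$ and $g_2' \equiv a_2$ for the two constants; both are nonzero because $g_i = f_i^{-1}$ is a local inverse. Then $g_1(u) = a_1 u + b_1$ and $g_2(u) = a_2 u + b_2$, so $f_1$ and $f_2$ are affine. The plan is to substitute these forms into Equation~\eqref{eq:diffalg_model1_Xobs_equiveq} and split by independent monomials in $X_1,\dot X_1,\ddot X_1$, as in the main text.

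First, the $\ddot X_1$ coefficients give $a_1 = a_2 \equiv a$; this is just the hypothesis $g_1'(u_1)=g_2'(u_2)$. Next, expand the remaining terms using $u_i = \dot X_1 + d_i X_1$:
\begin{equation*}
    d_i\bigl(a\dot X_1 + a\dot X_1 + a d_i X_1 + b_i\bigr) = 2a d_i \dot X_1 + a d_i^2 X_1 + d_i b_i .
\end{equation*}
Since $X_1$ and $\dot X_1$ can be varied independently, matching the coefficients of $\dot X_1$ gives $2ad_1 = 2ad_2$. As $a\neq 0$, this yields $d_1=d_2$.

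Matching the constant terms then gives $d_1 b_1 = d_1 b_2$. By the standing hypothesis $d_1\neq0$ of Lemma~\ref{lemma:g_prime_equality}, this forces $b_1=b_2$, so $g_1=g_2$. The $X_1$ coefficients $ad_i^2$ are automatically consistent and give no further information.

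The main point needing care is the justification that $X_1$, $\dot X_1$ and $\ddot X_1$ can be treated as independent, which is the same ideal-data assumption used throughout the section. Beyond that, the argument relies on two nondegeneracy facts, $a\neq0$ and $d\neq0$. Without $d\neq0$ the offset $b$ is genuinely unidentifiable, which is exactly why that condition appears in Proposition~\ref{prop:diffalg_one_parameter_identifiability}(ii). Everything else is routine coefficient comparison.
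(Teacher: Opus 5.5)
Your proposal is correct and follows essentially the same route as the paper: take a common slope $a$ from the hypothesis $g_1'=g_2'$, substitute the affine forms into Equation~\eqref{eq:diffalg_model1_Xobs_equiveq}, get $d_1=d_2$ from the $\dot X_1$ coefficient using $a\neq 0$, then $b_1=b_2$ from the constant term. You also state explicitly that this last step needs $d_1\neq 0$, which the paper's ``hence $b_1=b_2$'' uses without saying so.
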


\begin{proof}
Write $g_1(x) = ax + b_1$ and $g_2(x) = ax + b_2$ (the slopes are equal since $g_1' = g_2'$). Substituting into Equation~\eqref{eq:diffalg_model1_Xobs_equiveq} gives
\begin{align*}
    d_1(a\dot{X}_1 + a(\dot{X}_1 + d_1 X_1) + b_1) &= d_2(a\dot{X}_1 + a(\dot{X}_1 + d_2 X_1) + b_2).
\end{align*}
Expanding and collecting terms in $\dot{X}_1$, $X_1$, and constants:
\begin{equation*}
    2ad_1\dot{X}_1 + ad_1^2 X_1 + d_1 b_1 = 2ad_2\dot{X}_1 + ad_2^2 X_1 + d_2 b_2.
\end{equation*}
Equating coefficients gives $d_1 = d_2$ (from the $\dot{X}_1$ term, provided $a \neq 0$) and hence $b_1 = b_2$, so $g_1 = g_2$.
\end{proof}

\subsubsection{Proof of Proposition~\ref{prop:diffalg_two_parameter_identifiability}(i)} \label{appendix:proof_case3}

Isolate $X_2$ from Equation~\eqref{eq:diffalg_model2_dXdt} by writing
\begin{equation*}
    X_2 = f^{-1}(\dot{X}_1 + cX_1).
\end{equation*}
Set $u = \dot{X}_1 + cX_1$ and $g = f^{-1}$, so that $X_2 = g(u)$ and $\dot{X}_2 = g'(u)\dot{u}$ with $\dot{u} = \ddot{X}_1 + c\dot{X}_1$. Substituting into Equation~\eqref{eq:diffalg_model2_dYdt} and rearranging gives
\begin{equation}
    \label{eq:proofs_Xobs_f_ident_main}
    X_1 = g'(u)\dot{u} + dg(u).
\end{equation}
Suppose $(f_1,c_1,d_1)$ and $(f_2,c_2,d_2)$ yield identical $X_1$ dynamics. Write $g_1 = f_1^{-1}$, $g_2 = f_2^{-1}$, $u_1 = \dot{X}_1 + c_1 X_1$, and $u_2 = \dot{X}_1 + c_2 X_1$. Inserting both into~\eqref{eq:proofs_Xobs_f_ident_main}, expanding $\dot{u}_1 = \ddot{X}_1 + c_1\dot{X}_1$ and $\dot{u}_2 = \ddot{X}_1 + c_2\dot{X}_1$, and rearranging we have
\begin{align}
    \label{eq:proofs_Xobs_f_ident_equiv}
    g_1'(u_1)\dot{u}_1 + d_1 g_1(u_1) &= X_1 = g_2'(u_2)\dot{u}_2 + d_2 g_2(u_2) \notag \Longleftrightarrow \\
    (\ddot{X}_1 + c_1\dot{X}_1)g_1'(u_1) + d_1 g_1(u_1) &= (\ddot{X}_1 + c_2\dot{X}_1)g_2'(u_2) + d_2 g_2(u_2) \notag \Longleftrightarrow \\
    \ddot{X}_1\,g_1'(u_1) + c_1 g_1'(u_1)\dot{X}_1 + d_1 g_1(u_1) &= \ddot{X}_1\,g_2'(u_2) + c_2 g_2'(u_2)\dot{X}_1 + d_2 g_2(u_2).
\end{align}
Here, $\ddot{X}_1$ can be varied independently of $X_1$ and $\dot{X}_1$, so the coefficients of $\ddot{X}_1$ in Equation~\eqref{eq:proofs_Xobs_f_ident_equiv} must agree. Hence $g_1'(u_1) = g_2'(u_2)$. By Lemma~\ref{lemma:g_prime_equality} (with $c_1$, $c_2$ in place of $d_1$, $d_2$), either $c_1 = c_2$ or both $g_1'$ and $g_2'$ are constant (which implies that $g = f^{-1}$ is affine).

\medskip\noindent
\textit{Case 1: $c_1 = c_2 \equiv c$.} Then $u_1 = u_2 \equiv u$. Cancelling the $\ddot{X}_1$ and $\dot{X}_1$ terms from Equation~\eqref{eq:proofs_Xobs_f_ident_equiv} gives $d_1 g_1(u) = d_2 g_2(u)$. Differentiating gives $d_1 g_1'(u) = d_2 g_2'(u)$. Since $g_1' = g_2'$, either $d_1 = d_2$ (giving $g_1 = g_2$ and full identifiability) or $g_1' = g_2' = 0$ (so $g$ is constant, which contradicts local invertibility of $f$).

\medskip\noindent
\textit{Case 2: $g_1'$ and $g_2'$ constant.} Write $g_1(x) = ax + b_1$ and $g_2(x) = ax + b_2$ (since $g_1' = g_2'$). Separating terms in $\ddot{X}_1$, $\dot{X}_1$, $X_1$, and constants in Equation~\eqref{eq:proofs_Xobs_f_ident_equiv} gives
\begin{align*}
    c_1 + d_1 &= c_2 + d_2, \\
    c_1 d_1   &= c_2 d_2, \\
    b_1 d_1   &= b_2 d_2.
\end{align*}
The first two relations are symmetric with respect to $c$ and $d$. Hence we can only identify the two values of the set $\{c,d\}$, but not assign values to the individual parameters. The third relation then determines $b$ only up to the same two-fold ambiguity: if $\{c, d\} = {s_1, s_2}$, the two admissible intercepts are $b$ and $b\cdot s_1/s_2$. This yields the local identifiability equivalence classes of potential solutions $\{(c, d, f^{-1}(x) = ax + b) , (d, c, f^{-1}(x) = ax + bd/c)\}$.

\medskip\noindent
In Case~1, global identifiability holds. In Case~2, $f^{-1}$ must be affine, and local identifiability holds. Together, these cases establish part~(i) of Proposition~\ref{prop:diffalg_two_parameter_identifiability}.

\subsubsection{Identifiability with known initial conditions for the two-parameter mutual activation model} \label{appendix:proof_case4_known_initial_conditions}

\begin{proof}
Consider model~\eqref{eq:diffalg_model2_dXdt}--\eqref{eq:diffalg_model2_dYdt} with only $X_2$ observed, but suppose that $X_1(0)$ and $X_2(0)$ are known for each observed trajectory. Assume generic initial conditions, so that at least one trajectory has $X_2(0)\neq 0$. Since $X_2$ is observed, $\dot{X}_2(0)$ is also known. Evaluating Equation~\eqref{eq:diffalg_model2_dYdt} at $t=0$ gives
\begin{equation*}
    \dot{X}_2(0)=X_1(0)-dX_2(0),
\end{equation*}
and hence
\begin{equation*}
    d=\frac{X_1(0)-\dot{X}_2(0)}{X_2(0)}.
\end{equation*}
Thus $d$ is structurally identifiable.

Now suppose two choices $(f_1,c_1,d)$ and $(f_2,c_2,d)$ yield identical $X_2$ dynamics. Substituting both into the input-output equation~\eqref{eq:diffalg_model2_Yobs_Yonly} gives
\begin{equation*}
    -(c_1+d)\dot{X}_2 + f_1(X_2)-c_1dX_2
    =
    -(c_2+d)\dot{X}_2 + f_2(X_2)-c_2dX_2.
\end{equation*}
Since $\dot{X}_2$ and $X_2$ may be varied independently across generic trajectories, the coefficient of $\dot{X}_2$ gives $c_1+d=c_2+d$, and therefore $c_1=c_2$. Substituting this back into the same equation gives $f_1(X_2)=f_2(X_2)$. Hence $d$, $c$, and $f$ are all structurally identifiable.
\end{proof}


\subsection{Proofs for Section~\ref{sec:nonidentifiability_types}} \label{appendix:proofs_section4}

\subsubsection{Proof of Proposition~\ref{prop:f2f_nonident}} \label{appendix:proof_f2f}

Isolate $X_1$ from Equation~\eqref{eq:f2f_nonident_model_dYdt} and differentiate to give
\begin{align*}
    X_1       &= \dot{X}_2 + g(X_2), \\
    \dot{X}_1 &= \ddot{X}_2 + g'(X_2)\dot{X}_2.
\end{align*}
Substituting into Equation~\eqref{eq:f2f_nonident_model_dXdt} and rearranging we have
\begin{align}
    \label{eq:f2f_nonident_dY2dt2}
    \ddot{X}_2 + g'(X_2)\dot{X}_2 &= f(X_2) + d\bigl(\dot{X}_2 + g(X_2)\bigr) \notag \Longleftrightarrow \\
    \ddot{X}_2 &= \dot{X}_2(d - g'(X_2)) + f(X_2) + dg(X_2).
\end{align}
Suppose $(f_1, g_1, d_1)$ and $(f_2, g_2, d_2)$ yield identical $X_2$ dynamics. Selecting $f_2$, $g_2$, $d_2$ according to the forms in Equation~\eqref{eq:f2f_nonident_expr} and noting that this yields $g_2'(X_2) = g_1'(X_2) + (d_2 - d_1)$, we can insert into the right-hand side of Equation~\eqref{eq:f2f_nonident_dY2dt2} to give
\begin{align*}
    &\dot{X}_2(d_2 - g_2'(X_2)) + f_2(X_2) + d_2 g_2(X_2) \\
    & \qquad = \dot{X}_2(d_2 - g_1'(X_2) - (d_2 - d_1)) \\
    &\qquad \qquad  + f_1(X_2) + d_1 g_1(X_2) - d_2 g_1(X_2) - d_2(d_2-d_1)X_2 + d_2 g_1(X_2) + d_2(d_2-d_1)X_2 \\
    &\qquad = \dot{X}_2(d_1 - g_1'(X_2)) + f_1(X_2) + d_1 g_1(X_2).
\end{align*}
This expression equals the right-hand side of Equation~\eqref{eq:f2f_nonident_dY2dt2} for the first choice. Hence, the second-choice values yield identical $X_2$ dynamics. Since $d_2$ may be chosen freely and each choice produces distinct $f_2 \neq f_1$ and $g_2 \neq g_1$, both $f$ and $g$ are structurally non-identifiable.

\subsubsection{Proof of Proposition~\ref{prop:pure_nonident}} \label{appendix:proof_pure}

We isolate $X_2$ from Equation~\eqref{eq:fonly_nonident_model_dXdt} by writing
\begin{equation*}
    X_2 = f^{-1}(\dot{X}_1 + X_1).
\end{equation*}
Set $u = \dot{X}_1 + X_1$, so $X_2 = f^{-1}(u)$ and $\dot{X}_2 = (f^{-1})'(u)\,\dot{u}$. Substituting into Equation~\eqref{eq:fonly_nonident_model_dYdt} and rearranging gives
\begin{align}
    \label{eq:fonly_nonident_Xeq}
    (f^{-1})'(u)\,\dot{u} &= (X_1 - 1)\,f^{-1}(u) \notag \Longleftrightarrow \\
    X_1 &= \frac{(f^{-1}(u))'\dot{u}}{f^{-1}(u)} + 1.
\end{align}
Take $f_2$ as in Equation~\eqref{eq:pure_nonident_expr} for any $k \neq 0$. Then $f_2^{-1}(u) = k f_1^{-1}(u)$ and $(f_2^{-1})'(u) = k(f_1^{-1})'(u)$. Substituting gives
\begin{equation*}
    \frac{(f_2^{-1}(u))'\dot{u}}{f_2^{-1}(u)} + 1 = \frac{k(f_1^{-1}(u))'\dot{u}}{k f_1^{-1}(u)} + 1 = \frac{(f_1^{-1}(u))'\dot{u}}{f_1^{-1}(u)} + 1.
\end{equation*}
The first and second functions yield identical $X_1$ dynamics. Since $k$ may be chosen freely, $f$ is structurally non-identifiable.

\subsubsection{Proof of Proposition~\ref{prop:local_global}} \label{appendix:proof_local_global}

Substituting $f_2(X) = 1/f_1(X)$ into the right-hand side of Equation~\eqref{eq:local_global_model} gives
\begin{equation*}
    f_2(X) + \frac{1}{f_2(X)} = \frac{1}{f_1(X)} + \frac{1}{1/f_1(X)} = \frac{1}{f_1(X)} + f_1(X) = f_1(X) + \frac{1}{f_1(X)}.
\end{equation*}
The right-hand side is identical for $f_1$ and $f_2$, so both pairs yield identical $X$ dynamics. Since $f_2 = 1/f_1 \neq f_1$ whenever $f_1(X)^2 \neq 1$ for some $X$, the function $f$ is not globally identifiable. Finally, $(1/f_2)(X) = f_1(X)$, so the equivalence class contains exactly two elements, confirming local identifiability. Furthermore, even if $f$ is assumed to be continuous, whichever of the relations $f_1=f_2$ and $f_1=1/f_2$ holds may switch at points where $f_1(x^*)=f_2(x^*)$, permitting the generation of a potentially large equivalence class of functions.


\subsection{Proofs for Section~\ref{sec:applications}} \label{appendix:proofs_section5}

\subsubsection{Proof of Proposition~\ref{prop:chemical reaction network}} \label{appendix:proof_chemical reaction network}

Isolate $X_1$ from Equation~\eqref{eq:chemical reaction network_dYdt} and differentiate to give
\begin{align*}
    X_1       &= \dot{X}_2 + f(X_2), \\
    \dot{X}_1 &= \ddot{X}_2 + f'(X_2)\dot{X}_2.
\end{align*}
Substituting into Equation~\eqref{eq:chemical reaction network_dXdt} we have
\begin{equation}
    \label{eq:dchemical reaction network_Yobs_dY2dt2}
    \ddot{X}_2 = (1-d)f(X_2) - (d + f'(X_2))\dot{X}_2.
\end{equation}
Suppose $(f_1,d_1)$ and $(f_2,d_2)$ yield identical $X_2$ dynamics. Inserting both into Equation~\eqref{eq:dchemical reaction network_Yobs_dY2dt2} and equating gives
\begin{equation}
    \label{eq:dchemical reaction network_Yobs_equiveq}
    (1-d_1)f_1(X_2) - (d_1 + f_1'(X_2))\dot{X}_2 = \ddot{X}_2 = (1-d_2)f_2(X_2) - (d_2 + f_2'(X_2))\dot{X}_2.
\end{equation}
Here, $\dot{X}_2$ and $X_2$ can be varied independently, allowing us to separate Equation~\eqref{eq:dchemical reaction network_Yobs_equiveq} into two independent equalities:
\begin{align}
    (1-d_1)f_1(X_2) &= (1-d_2)f_2(X_2), \label{eq:dchemical reaction network_Yobs_dalg_1} \\
    d_1 + f_1'(X_2) &= d_2 + f_2'(X_2). \label{eq:dchemical reaction network_Yobs_dalg_2}
\end{align}

\medskip\noindent
\textit{Case 1: $f$ is non-affine.} Suppose for contradiction that $d_1 \neq d_2$. From Equation~\eqref{eq:dchemical reaction network_Yobs_dalg_1}:
\begin{equation*}
    f_1(X_2) = \frac{1-d_2}{1-d_1} f_2(X_2).
\end{equation*}
Differentiating and substituting into Equation~\eqref{eq:dchemical reaction network_Yobs_dalg_2} gives $f_1'(X_2) = d_2 - 1$, a constant, so $f_1$ is affine. By the same argument applied to $f_2$, $f_2$ is also affine. This contradicts our assumption that $f$ is non-affine. Therefore $d_1 = d_2 \equiv d$. We then have:
\begin{itemize}
    \item if $d \neq 1$, Equation~\eqref{eq:dchemical reaction network_Yobs_dalg_1} gives $f_1 = f_2$, and so both $d$ and $f$ are globally structurally identifiable;
    \item if $d = 1$, Equation~\eqref{eq:dchemical reaction network_Yobs_dalg_1} is trivially satisfied for any $f_1$, $f_2$ and  Equation~\eqref{eq:dchemical reaction network_Yobs_dalg_2} gives $f_1' = f_2'$. Thus $d$ is identifiable, but only $f'$ (not $f$ itself) is identifiable.
\end{itemize}

\medskip\noindent
\textit{Case 2: $f$ is affine.} Write $f_i(X_2) = a_i X_2 + b_i$. Substituting into Equations~\eqref{eq:dchemical reaction network_Yobs_dalg_1}--\eqref{eq:dchemical reaction network_Yobs_dalg_2} and equating coefficients gives
\begin{align*}
    (1-d_1)a_1 &= (1-d_2)a_2, \\
    (1-d_1)b_1 &= (1-d_2)b_2, \\
    d_1 + a_1  &= d_2 + a_2.
\end{align*}
The first and third relations show that $a + d$ and $(1-d)a$ are globally identifiable. Substituting $a = (a+d) - d$ into $(1-d)a$ shows that $d$ satisfies the quadratic $d^2 - (1 + (a+d))d + ((a+d) - (1-d)a) = 0$, with coefficients determined by the globally identifiable quantities. This yields at most two solutions for $d$, and hence for $a$ and $b$. Thus $a$, $d$, and $b$ are each locally identifiable. 

In summary, full structural identifiability holds only in Case~1 with $d \neq 1$.

\subsubsection{Proof of Proposition~\ref{prop:chemical reaction network_Xobs}} \label{appendix:proof_chemical reaction network_Xobs}

Given an admissible choice $(f_1, d)$ with associated unobserved trajectory $X_{2,1}(t)$, we wish to show that the family in Equation~\eqref{eq:chemical reaction network_Xobs_nonident_expr} yields identical $X_1$ dynamics. Setting $X_{2,2}(t) = X_{2,1}(t) - k$, so that $X_{2,2}(0) = X_{2,1}(0) - k$ and $\dot{X}_{2,2} = \dot{X}_{2,1}$, and substituting $f_2(X_2) = f_1(X_2+k)$ into Equations~\eqref{eq:chemical reaction network_dXdt}--\eqref{eq:chemical reaction network_dYdt} gives
\begin{align*}
    \dot{X}_1 &= f_2(X_{2,2}) - dX_1 = f_1(X_{2,2} + k) - dX_1 = f_1(X_{2,1}) - dX_1, \\
    \dot{X}_{2,2} &= X_1 - f_2(X_{2,2}) = X_1 - f_1(X_{2,1}) = \dot{X}_{2,1}.
\end{align*}
Thus the second choice reproduces the first-choice $X_1$ dynamics exactly. Since $k$ may be chosen freely, $f$ is structurally non-identifiable.

We now show that $d$ is identifiable (provided $f$ is locally invertible). Set $g = f^{-1}$ and $u = \dot{X}_1 + dX_1$. From $\dot{X}_1 = f(X_2) - dX_1$ we have $X_2 = g(u)$, and differentiating gives $\dot{X}_2 = g'(u)\dot{u}$ with $\dot{u} = \ddot{X}_1 + d\dot{X}_1$. Substituting into Equation~\eqref{eq:chemical reaction network_dYdt} and rearranging yields
\begin{align} 
    g'(u)\dot{u} &= X_1 - f(g(u)) \Longleftrightarrow \notag \\
    g'(u)\dot{u} &= X_1 - u \Longleftrightarrow \notag \\
    g'(u)(\ddot{X}_1 + d\dot{X}_1) &= X_1 - (\dot{X}_1 + dX_1) \Longleftrightarrow \notag \\
    \dot{X}_1 &= (1-d)X_1 - g'(u)(\ddot{X}_1 + d\dot{X}_1). \label{eq:chemical reaction network_Xobs_Xdot}
\end{align}
Suppose $(g_1, d_1)$ and $(g_2, d_2)$ both yield identical $X_1$ dynamics, with $u_i = \dot{X}_1 + d_iX_1$. Inserting each into Equation~\eqref{eq:chemical reaction network_Xobs_Xdot}, equating (since both expressions equal the same $\dot{X}_1$), and rearranging, gives
\begin{align}
    (1-d_1)X_1 - g_1'(u_1)(\ddot{X}_1 + d_1\dot{X}_1) &= \dot{X}_1 = (1-d_2)X_1 - g_2'(u_2)(\ddot{X}_1 + d_2\dot{X}_1) \notag \Longleftrightarrow \\
    (1-d_1)X_1 -  d_1g_1'(u_1)\dot{X}_1 - g_1'(u_1)\ddot{X}_1 &= (1-d_2)X_1 -  d_2g_2'(u_2)\dot{X}_1 - g_2'(u_2)\ddot{X}_1. \label{eq:chemical reaction network_Xobs_equiveq}
\end{align}
Here, the observed variable and its derivatives can be varied independently, so the $X_1$ terms in Equation~\eqref{eq:chemical reaction network_Xobs_equiveq} must agree:
\begin{equation*}
    (1-d_1)X_1 = (1-d_2)X_1, 
\end{equation*}
which trivially gives $d_1 = d_2$, i.e. $d$ is structurally identifiable.

\subsubsection{Proofs for the Lotka--Volterra model} \label{appendix:proof_LV}

\begin{proof}[Proof of Proposition~\ref{prop:LV2_observability}(i)]
Given an admissible choice $(f_1, \alpha_1)$ with associated unobserved trajectory $X_{2,1}(t)$, we wish to show that any choice of the following form
\begin{align*}
    \alpha_2 &\in \mathbb{R}, \\
    f_2(X_1, X_{2,2}) &= f_1(X_1, X_{2,1}) + (\alpha_1 - \alpha_2)X_1,\\
    X_{2,2}(t)         &= X_{2,1}(t) + Z(t),
\end{align*}
where $Z$ solves $\dot{Z} = \gamma(\alpha_1 - \alpha_2)X_1 + \delta Z$, yields dynamics identical to $(f_1, \alpha_1, X_{2,1}(t))$. Inserting $(f_2, \alpha_2, X_{2,2}(t))$ into Equation~\eqref{eq:LV2_dXdt} gives
\begin{align}
    \dot{X}_1 &= \alpha_2 X_1 + f_2(X_1, X_{2,2}) \\
            &= \alpha_2 X_1 + f_1(X_1, X_{2,1}) + (\alpha_1 - \alpha_2)X_1 \\
            &= \alpha_1 X_1 + f_1(X_1,X_{2,1}).
\end{align}
The $X_1$ dynamics of $(f_1, \alpha_1, X_{2,1}(t))$ are therefore reproduced. It remains to verify that $X_{2,2} = X_{2,1} + Z$ satisfies the $\dot{X}_2$ equation~\eqref{eq:LV2_dYdt} for the second model. Since $X_{2,1}$ satisfies the first system and $Z$ satisfies $\dot{Z} = \gamma(\alpha_1 - \alpha_2)X_1 + \delta Z$ by construction we have
\begin{align*}
    \dot{X}_{2,2} &= \dot{X}_{2,1} + \dot{Z} \\
              &= \bigl(\gamma f_1(X_1,X_{2,1}) + \delta X_{2,1}\bigr) + \bigl(\gamma(\alpha_1 - \alpha_2)X_1 + \delta Z\bigr) \\
              &= \gamma\bigl(f_1(X_1,X_{2,1}) + (\alpha_1 - \alpha_2)X_1\bigr) + \delta(X_{2,1} + Z) \\
              &= \gamma f_2(X_1,X_{2,2}) + \delta X_{2,2}.
\end{align*}
Thus $(f_2, \alpha_2, X_{2,2})$ is a valid solution to the full system that reproduces the same $X_1$ dynamics. Since $\alpha_2$ may be chosen freely, infinitely many valid pairs $(f_2, X_{2,2})$ exist, so $f$ is structurally non-identifiable.
\end{proof}

\begin{proof}[Proof of Proposition~\ref{prop:LV2_observability}(ii)]
Given an admissible choice $(f_1, \alpha, \gamma, \delta_1)$ with associated trajectories $X_{1,1}(t)$ and $X_2(t)$, we wish to show that any choice of the form
\begin{align*}
    \delta_2    &\in \mathbb{R}, \\
    f_2(X_{1,2}, X_2) &= f_1(X_{1,1}, X_2) + \frac{(\delta_1 - \delta_2)X_2}{\gamma}, \\
    X_{1,2}(t)      &= X_{1,1}(t) + Z(t),
\end{align*}
where $Z$ solves $\dot{Z} = \alpha Z + (\delta_1 - \delta_2)X_2/\gamma$, yields dynamics identical to $(f_1, \alpha, \gamma, \delta_1)$ with respect to $X_2$. Inserting $(f_2, \delta_2, X_{1,2})$ into Equation~\eqref{eq:LV2_dYdt} gives
\begin{align*}
    \dot{X}_2 &= \gamma f_2(X_{1,2}, X_2) + \delta_2 X_2 \\
            &= \gamma\!\left(f_1(X_{1,1}, X_2) + \frac{(\delta_1 - \delta_2)X_2}{\gamma}\right) + \delta_2 X_2 \\
            &= \gamma f_1(X_{1,1}, X_2) + (\delta_1 - \delta_2)X_2 + \delta_2 X_2 \\
            &= \gamma f_1(X_{1,1}, X_2) + \delta_1 X_2.
\end{align*}
The $X_2$ dynamics of $(f_1, \alpha, \gamma, \delta_1)$ are therefore reproduced. It remains to verify that $X_{1,2} = X_{1,1} + Z$ satisfies Equation~\eqref{eq:LV2_dXdt} for the second model. Since $X_{1,1}$ satisfies the first system and $Z$ satisfies $\dot{Z} = \alpha Z + (\delta_1 - \delta_2)X_2/\gamma$ by construction we have
\begin{align*}
    \dot{X}_{1,2} &= \dot{X}_{1,1} + \dot{Z} \\
              &= \bigl(\alpha X_{1,1} + f_1(X_{1,1}, X_2)\bigr) + \left(\alpha Z + \frac{(\delta_1 - \delta_2)X_2}{\gamma}\right) \\
              &= \alpha(X_{1,1} + Z) + f_1(X_{1,1}, X_2) + \frac{(\delta_1 - \delta_2)X_2}{\gamma} \\
              &= \alpha X_{1,2} + f_2(X_{1,2}, X_2).
\end{align*}
Thus $(f_2, \alpha, \gamma, \delta_2, X_{1,2})$ is a valid solution to the full system that reproduces the same $X_2$ dynamics. Since $\delta_2$ may be chosen freely, infinitely many valid pairs $(f_2, X_{1,2})$ exist, so $f$ is structurally non-identifiable.
\end{proof}

\begin{proof}[Proof of Proposition~\ref{prop:LV2_observability}(iii)]
Suppose $(f_1, \alpha_1, \gamma_1, \delta_1)$ and $(f_2, \alpha_2, \gamma_2, \delta_2)$ yield identical $X_1$ and $X_2$ dynamics. Rewriting~\eqref{eq:LV2_dYdt} as $\dot{X}_2/\gamma = f(X_1,X_2) + (\delta/\gamma)X_2$ and subtracting from Equation~\eqref{eq:LV2_dXdt} gives
\begin{equation*}
    \dot{X}_1 - \dot{X}_2/\gamma = \alpha X_1 - (\delta/\gamma)X_2.
\end{equation*}
Since $X_1$, $X_2$, $\dot{X}_1$, and $\dot{X}_2$ are all observed and may be varied independently, $\alpha$, $\delta/\gamma$, and $1/\gamma$ are directly identifiable as the coefficients of $X_1$, $X_2$, and $\dot{X}_2$ in this expression, giving $\alpha_1 = \alpha_2$, $\gamma_1 = \gamma_2$, and $\delta_1 = \delta_2$ (provided $\gamma \neq 0$). With $\alpha$ now known, $f(X_1,X_2) = \dot{X}_1 - \alpha X_1$ is directly computable from observations, hence $f_1 = f_2$. Since there is only a single choice $(f, \alpha, \gamma, \delta)$ yielding any given dynamics, all functions and parameters are structurally identifiable.
\end{proof}


\section{Example models for Section~\ref{sec:general_nonident}} \label{appendix:example_models}

Below are showcase example models satisfying the conditions of each proposition in Section~\ref{sec:general_nonident}.


\subsection{Proposition~\ref{prop:1var_nonident}: single-variable ODEs} \label{appendix:examples_1var}

Such models are of the form $\dot{X} = f(X) + g(X;\vec{p})$ with $f$ unknown and at least one unknown parameter in $g$.

\begin{enumerate}
    
    \item \textbf{Falling object with unknown drag.} Velocity $v$ of an object subject to gravity and an unknown drag force.
    \begin{equation*}
        \dot{v} = g_0 - f(v).
    \end{equation*}
    Here $-f(v)$ is the unknown drag law, describing how air resistance depends on speed and $g(v;g_0) = g_0$ is the known gravitational term, with unknown acceleration $g_0$.

    \item \textbf{Population growth with unknown density-dependent mortality.} A population of size $N$ growing linearly, subject to an unknown density-dependent mortality capturing effects such as crowding or resource competition.
    \begin{equation*}
        \dot{N} = rN - f(N).
    \end{equation*}
    Here $-f(N)$ is the unknown density-dependent mortality law and $g(N;r) = rN$ is the known linear growth term, with unknown per-capita rate $r$.

    \item \textbf{Tumour growth under treatment.} Tumour volume $V$ with an unknown growth law and linear treatment-induced clearance.
    \begin{equation*}
        \dot{V} = f(V) - kV.
    \end{equation*}
    Here $f(V)$ is the unknown growth or proliferation law and $g(V;k) = -kV$ is the known linear treatment-clearance term, with unknown rate $k$.

    \item \textbf{Newton's law of cooling with unknown radiation.} Temperature $T$ of an object cooling via convection to an ambient environment and via an unknown radiative process.
    \begin{equation*}
        \dot{T} = f(T) - h(T - T_{\mathrm{env}}).
    \end{equation*}
    Here $f(T)$ is the unknown radiative heat-loss law. $g(T;h,T_{\mathrm{env}}) = -h(T - T_{\mathrm{env}})$ is the known convective term, with unknown heat-transfer coefficient $h$ and unknown ambient temperature $T_{\mathrm{env}}$.
    
\end{enumerate}


\subsection{Proposition~\ref{prop:multivar_nonident}: multi-variable ODEs} \label{appendix:examples_multivar}

 We consider the class of models where the equation containing $f$ decomposes as $\dot{X}_1 = f(\vec{u}) + r(\vec{v};\vec{a}) + s(\vec{X};\vec{b})$ with $\vec{v} \subseteq \vec{u}$ and $\vec{a}$ absent from all other equations. The key structural requirement is that the known part of the equation containing $f$ includes a parameter-dependent term $r(\vec{v};\vec{a})$ acting on variables $\vec{v}$ that are also arguments of $f$. Changes in $\vec{a}$ can always be absorbed into $f$ while leaving the dynamics unchanged, making both non-identifiable.

\begin{enumerate}

    \item \textbf{Self-activating gene-expression network.} Two biochemical species $X_1$ and $X_2$ interconverting at known rates, where $X_1$ additionally exhibits unknown self-activated production and both species degrade linearly.
    \begin{align*}
        \dot{X}_1 &= f(X_1) - k_1 X_1 + k_2 X_2 - d_1 X_1, \\
        \dot{X}_2 &= k_1 X_1 - k_2 X_2 - d_2 X_2.
    \end{align*}
    Here $f(\vec{u}) = f(X_1)$ is the unknown self-activation rate, with $\vec{u} = \{X_1\}$, $r(\vec{v};\vec{a}) = -d_1 X_1$ has $\vec{a} = \{d_1\}$ (the unknown degradation rate of $X_1$) acting on $\vec{v} = \{X_1\} \subseteq \vec{u}$, absent from $\dot{X}_2$. The function $s(\vec{X}) = -k_1 X_1 + k_2 X_2$ contains the known interconversion rates $k_1$ and $k_2$---since these are known (not inferred) they are not entangled with $f$, and $d_2$ is the unknown degradation rate of $X_2$.

    \item \textbf{Epidemic model with unknown nonlinear recovery.} Susceptible $S$ and infectious $I$ populations, where infectious individuals recover through both a linear recovery rate and a nonlinear rate capturing e.g.\ hospital-capacity constraints.
    \begin{align*}
        \dot{S} &= -\beta SI, \\
        \dot{I} &= \beta SI - f(I) - \gamma I.
    \end{align*}
    Here $f(\vec{u}) = f(I)$ is the unknown nonlinear recovery rate, with $\vec{u} = \{I\}$. $r(\vec{v};\vec{a}) = -\gamma I$ has $\vec{a} = \{\gamma\}$ (the unknown linear recovery rate) acting on $\vec{v} = \{I\} \subseteq \vec{u}$, absent from $\dot{S}$. The infection term, $s(\vec{X};\vec{b}) = \beta SI$ has $\vec{b} = \{\beta\}$ (the unknown transmission rate parameter).

    \item \textbf{Grazer and vegetation dynamics.} A grazer population $G$ consuming vegetation $V$, growing at an unknown rate depending on both $G$ and $V$. $G$ is also subject to constant immigration and linear mortality, while $V$ follows logistic growth.
    \begin{align*}
        \dot{G} &= f(G,V) + a - mG, \\
        \dot{V} &= rV\!\left(1 - \frac{V}{K}\right).
    \end{align*}
    Here $f(\vec{u}) = f(G,V)$ is the unknown growth law, with $\vec{u} = \{G,V\}$. $r(\vec{v};\vec{a}) = a - mG$ has $\vec{a} = \{a,m\}$ (the unknown immigration rate and mortality rate) acting on $\vec{v} = \{G\} \subseteq \vec{u}$, absent from $\dot{V}$; $s(\vec{X};\vec{b}) \equiv 0$ is trivial. The parameters $r$ and $K$ are the vegetation's unknown intrinsic growth rate and carrying capacity.
    
\end{enumerate}


\subsection{Proposition~\ref{prop:complete_UDE_nonident}: Fully augmented ODEs} \label{appendix:examples_complete}

We consider the class of models where every equation has the form $\dot{X}_i = f_i(\vec{X}) + g_i(\vec{X};\vec{p}_i)$ with at least one unknown parameter per equation.

\begin{enumerate}
    \item \textbf{Coupled thermal compartments with unknown exchange laws.} Temperatures $T_1$ and $T_2$ in two coupled compartments, with unknown heat-exchange corrections and explicit linear heat loss to an ambient environment.
    \begin{align*}
        \dot{T}_1 &= f(T_1,T_2) - h_1(T_1 - T_{\mathrm{env}}), \\
        \dot{T}_2 &= g(T_1,T_2) - h_2(T_2 - T_{\mathrm{env}}).
    \end{align*}
    Here $f(T_1,T_2)$ and $g(T_1,T_2)$ are the unknown exchange laws. The known terms contain unknown heat-loss coefficients $h_1$ and $h_2$, which can be absorbed into the corresponding unknown functions.

    \item \textbf{Two competing firms with unknown market effects.} Revenue levels $x_1$ and $x_2$ of two firms operating in partially overlapping but distinct market segments, so that each firm's growth is limited both by its own market saturation and by competitive pressure from the other. Unknown functions $f$ and $g$ capture additional market dynamics not represented by the explicit competition structure---for instance, brand-loyalty effects or differential responses to external demand shocks that depend on both firms' current positions.
    \begin{align*}
        \dot{x}_1 &= f(x_1,x_2) + r_1 x_1\!\left(1 - \frac{x_1 + \alpha_{12}\,x_2}{K_1}\right), \\
        \dot{x}_2 &= g(x_1,x_2) + r_2 x_2\!\left(1 - \frac{x_2 + \alpha_{21}\,x_1}{K_2}\right).
    \end{align*}
    Here $f(x_1,x_2)$ and $g(x_1,x_2)$ are the unknown market-effect laws: $g_1(\vec{X};r_1,K_1,\alpha_{12}) = r_1 x_1(1-(x_1+\alpha_{12}x_2)/K_1)$ contains unknown intrinsic growth rate $r_1$, market capacity $K_1$, and cross-competition coefficient $\alpha_{12}$ (with $\alpha_{12} \neq \alpha_{21}$ reflecting the asymmetry of the two segments), while $g_2(\vec{X};r_2,K_2,\alpha_{21}) = r_2 x_2(1-(x_2+\alpha_{21}x_1)/K_2)$ contains the corresponding parameters $r_2$, $K_2$, and $\alpha_{21}$.

    \item \textbf{Soil moisture and vegetation water dynamics.} Soil moisture $M$ and vegetation water status $V$ (e.g.\ plant water content), coupled through unknown soil-water and plant-water dynamics laws, with a constant water input and linear loss terms.
    \begin{align*}
        \dot{M} &= f(M,V) + P - \ell M, \\
        \dot{V} &= g(M,V) - mV.
    \end{align*}
    Here $f(M,V)$ is the unknown soil--water dynamics law, capturing the effect of soil and vegetation water content on soil--water retention, and $g(M,V)$ is the unknown plant--water dynamics law, capturing the effect of soil moisture on vegetation water uptake. The function $g_1(\vec{X};P,\ell) = P - \ell M$ contains unknown constant water-input rate $P$ (e.g.\ average rainfall) and unknown background moisture-loss rate $\ell$, while $g_2(\vec{X};m) = -mV$ contains unknown vegetation-water loss rate $m$.
\end{enumerate}


\section{Additional examples of local functional identifiability} \label{appendix:local_identifiability_examples}

In Section~\ref{sec:local_global} we introduced the notion of local functional identifiability, and gave a simple example in Proposition~\ref{prop:local_global}. Here, we give additional examples of different forms of local structural identifiability that can appear in the context of functions. The first two examples are translation from cases of local structural parametric identifiability, while others two only appears in the context of functions. These examples are not intended to be exhaustive, but rather to illustrate how a wide variety of local functional identifiability scenarios are possible.

\begin{enumerate}
    \item \textbf{Sign ambiguity.} Consider
    \begin{equation*}
        \dot{X} = f(X)^2,
    \end{equation*}
    where $X$ is observed. Here, $f$ is locally, but not globally, identifiable.
    \begin{proof}[Proof]
        If two admissible functions $f_1$ and $f_2$ give the same observed dynamics, then
        \begin{equation*}
            f_1(x)^2 = f_2(x)^2.
        \end{equation*}
        Here, both the alternatives $f_1=f_2$ and $f_1=-f_2$ may hold, implying that at least two potential functions will be consistent with any given observed dynamics. This implies local functional identifiability.
    \end{proof}

    We note that $f$ is globally identifiable at any point where $f_1(x^*)=f_2(x^*)=0$. Furthermore, even if $f$ is assumed to be continuous, whichever of the relations $f_1=f_2$ and $f_1=-f_2$ holds may switch at points where $f_1(x^*)=f_2(x^*)=0$, permitting the generation of a potentially large equivalence class of functions.

    \item \textbf{Function-label permutation ambiguity.} Consider the two-state model
    \begin{align*}
        \dot{X}_1 &= f(X_1)+g(X_1),\\
        \dot{X}_2 &= f(X_1)g(X_1),
    \end{align*}
    where both $X_1$ and $X_2$ are observed. Here, $f$ and $g$ are locally, but not globally, identifiable.
    \begin{proof}[Proof]
        Let $(f_1,g_1)$ and $(f_2,g_2)$ be two admissible pairs producing the same observed dynamics. Since both state variables are observed, the model equations give
        \begin{align*}
            f_2(x)+g_2(x)=f_1(x)+g_1(x),
            \qquad
            f_2(x)g_2(x)=f_1(x)g_1(x),
        \end{align*}
        on the observed range of $X_1$. From the first relation, $g_2(x)=f_1(x)+g_1(x)-f_2(x)$. Substituting this into the second relation gives
        \begin{equation*}
            \bigl(f_2(x)-f_1(x)\bigr)\bigl(f_2(x)-g_1(x)\bigr)=0.
        \end{equation*}
        Here, both the alternatives $(f_1,g_1)=(f_2,g_2)$ and $(f_1,g_1)=(g_2,f_2)$ may hold, implying that at least two possible pairs of functions will be consistent with any given observed dynamics. This implies local functional identifiability.
    \end{proof}

    We note that $(f,g)$ is globally identifiable at any point where $f_1(x^*)=g_1(x^*)$. Furthermore, even if $f$ and $g$ are assumed to be continuous, whichever of the relations $(f_1,g_1)=(f_2,g_2)$ and $(f_1,g_1)=(g_2,f_2)$ holds may switch at points where $f_1(x^*)=g_1(x^*)$, permitting the generation of a potentially large equivalence class of function pairs.

    \item \textbf{Argument-reflection ambiguity.} Consider
    \begin{align*}
        \dot{X}_1 &= f(X_1)+f(-X_1),\\
        \dot{X}_2 &= f(X_1)f(-X_1),
    \end{align*}
    where both states are observed. Here, $f$ is locally, but not globally, identifiable.
    \begin{proof}[Proof]
        Let $f_1$ and $f_2$ give the same observed dynamics. Then
        \begin{align*}
            f_2(x)+f_2(-x)=f_1(x)+f_1(-x),
            \qquad
            f_2(x)f_2(-x)=f_1(x)f_1(-x),
        \end{align*}
        for the observed values $x$ of $X_1$. From the first relation, $f_2(-x)=f_1(x)+f_1(-x)-f_2(x)$. Substituting this into the second relation gives
        \begin{equation*}
            \bigl(f_2(x)-f_1(x)\bigr)\bigl(f_2(x)-f_1(-x)\bigr)=0.
        \end{equation*}
        Here, both the alternatives $f_1=f_2$ and $f_2(x)=f_1(-x)$ may hold, implying that at least two possible functions will be consistent with any given observed dynamics. This implies local functional identifiability.
    \end{proof}

    We note that $f$ is globally identifiable at any point where $f_1(x^*)=f_1(-x^*)$. Furthermore, even if $f$ is assumed to be continuous, whichever of the relations $f_1(x^*)=f_2(x^*)$ and $f_2(x^*)=f_1(-x^*)$ holds may switch at points where $f_1(x^*)=f_1(-x^*)$, permitting the generation of a potentially large equivalence class of functions.

    \item \textbf{Input-coordinate ambiguity.} Consider
    \begin{align*}
        \dot{X}_1 &= f(X_1,X_2)+f(X_2,X_1),\\
        \dot{X}_2 &= f(X_1,X_2)f(X_2,X_1),
    \end{align*}
    where both states are observed. Here, $f$ is locally, but not globally, identifiable.
    \begin{proof}[Proof]
        Let $f_1$ and $f_2$ give the same observed dynamics. Since both states are observed,
        \begin{align*}
            f_2(x,y)+f_2(y,x)=f_1(x,y)+f_1(y,x),
            \qquad
            f_2(x,y)f_2(y,x)=f_1(x,y)f_1(y,x)
        \end{align*}
        on the observed pairs $(x,y)=(X_1,X_2)$. From the first relation, $f_2(y,x)=f_1(x,y)+f_1(y,x)-f_2(x,y)$. Substituting this into the second relation gives
        \begin{equation*}
            \bigl(f_2(x,y)-f_1(x,y)\bigr)\bigl(f_2(x,y)-f_1(y,x)\bigr)=0.
        \end{equation*}
        Here, both the alternatives $f_1=f_2$ and $f_2(x,y)=f_1(y,x)$ may hold, implying that at least two possible functions will be consistent with any given observed dynamics. This implies local functional identifiability.
    \end{proof}

    We note that $f$ is globally identifiable at any point where $f_1(x^*,y^*)=f_1(y^*,x^*)$. Furthermore, even if $f$ is assumed to be continuous, whichever of the relations $f_1(x,y)=f_2(x,y)$ and $f_2(x,y)=f_1(y,x)$ holds may switch at points where $f_1(x^*,y^*)=f_1(y^*,x^*)$, permitting the generation of a potentially large equivalence class of functions.
\end{enumerate}


\section{Chemical reaction network model derivation} \label{appendix:chemical reaction network_derivation}

A chemical reaction network model consists of a set of species and a set of reactions. Here we consider a model with species $X_1$ and $X_2$ and the following reaction events:
\begin{align*}
\ce{ $X_1$ &->C[$1$] $X_1 + X_2$, } \\
\ce{ $X_2$ &->C[$g(X_2)$] $X_1$,} \\
\ce{ $X_1$ &->C[$d$] \text{\o},}
\end{align*}
where $d>0$ is a parameter and $g$ is a function of the concentration of $X_2$. Application of the law of mass action, gives the following ODE model:
\begin{align*}
    \dot{X}_1 = g(X_2)X_2 - dX_1, \\
    \dot{X}_2 = X_1 - g(X_2)X_2.
\end{align*}
Setting $f(X_2) = g(X_2)X_2$ generates the model in Equations~\eqref{eq:chemical reaction network_dXdt}--\eqref{eq:chemical reaction network_dYdt}.





\end{document}